\numberwithin{equation}{section}
\newcommand{\cx}{{\mathbb C}} 
\newcommand{\half}{{\mathbb H}} 
\newcommand{\integers}{{\mathbb Z}}
\newcommand{\ratls}{{\mathbb Q}} 
\newcommand{\proj}{{\mathbb P}}
\newcommand{\zed}{\integers}
\newcommand{\barmoduli}[1][g]{{\overline{\mathcal M}}_{#1}}
\newcommand{\bdry}{\partial}
\newcommand{\Gm}{\mathbb{G}_m}
\renewcommand{\tilde}{\widetilde}
\newcommand{\RM}[1][\mathcal{O}]{{\cal RM}_{#1}}
\newcommand{\RA}[1][\mathcal{O}]{{\cal RA}_{#1}}
\newcommand{\RS}[1][E]{{\cal RS}_{#1}}
\newcommand{\moduli}[1][g]{{\mathcal M}_{#1}}
\newcommand{\AVmoduli}[1][g]{{\mathcal A}_{#1}}
\newcommand{\A}[1][g]{\AVmoduli[#1]}
\newcommand{\SL}{{\mathrm{SL}}}
\newcommand{\Ord}{\mathcal{O}}
\newcommand{\ba}{{\boldsymbol{a}}}
\newcommand{\bzero}{\boldsymbol{0}}
\DeclareMathOperator{\Hom}{Hom}
\DeclareMathOperator{\Ann}{Ann}
\DeclareMathOperator{\CR}{CR}
\DeclareMathOperator{\Hol}{Hol}
\DeclareMathOperator{\Sym}{Sym}
\DeclareMathOperator{\bS}{{\bf S}}
\DeclareMathOperator{\Tr}{Tr}
\DeclareMathOperator{\trace}{Tr}
\newtheoremstyle{example}{3pt}{3pt}{\upshape}{}{\itshape}{.}{.5em}{}
\newtheorem{theorem}{Theorem}[section] 
\newtheorem{prop}[theorem]{Proposition} 
\newtheorem{cor}[theorem]{Corollary}
\newtheorem{lemma}[theorem]{Lemma}
\newcommand{\tR}{{\tilde{R}}}
\theoremstyle{example}
\theoremstyle{definition}
\theoremstyle{remark}
\def\blfootnote{\xdef\@thefnmark{}\@footnotetext}
\renewcommand{\l@section}{\@dottedtocline{0}{1.5em}{2.3em}}
\renewcommand{\l@subsection}{\@dottedtocline{1}{3.8em}{3.2em}}
\renewcommand{\l@subsubsection}{\@dottedtocline{2}{7.0em}{4.1em}}
\newcommand{\cS}{\mathcal{S}}
\newcommand{\ev}{{\rm ev}}
\newcommand{\ii}{{\rm in}}
\newcommand{\out}{{\rm out}}
\begin{document}

\bibliographystyle{halpha}

\title{The locus of real multiplication and \\ the Schottky locus}
\author{Matt Bainbridge and Martin M\"oller}
\maketitle

\tableofcontents

\section{Introduction}
\label{sec:introduction}

Are there locally symmetric subvarieties in the moduli space of
abelian varieties, whose generic point lies in the image of
the moduli space of curves, i.e.\ in the Schottky locus? 
This question was raised by Oort, motivated by the Conjecture
of Coleman that for $g$ large enough there are only finitely 
many curves of genus $g$ with complex multiplication.
\par
The first important contributions to this problem were made by Hain \cite{Hain},
whose results were subsequently refined by deJong and Zhang \cite{deJZhang}, 
see also   \cite{MoonenOort} for a survey.
In this paper we do not consider general
locally symmetric subvarieties (or Shimura subvarieties) but
restrict to the case of Hilbert modular varieties. They parametrize
abelian varieties with real multiplication. So our main result
is a statement about components of the real multiplication locus.
\par 
\begin{theorem}
  \label{thm:Intro}
  There is no component of the real multiplication locus in the moduli
  space of four-dimensional abelian varieties $\AVmoduli[4]$ that lies
  generically in the image of the moduli space of curves $\moduli[4]$.
\end{theorem}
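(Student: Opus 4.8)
The plan is to argue by contradiction and reduce everything to showing that a single Hilbert modular fourfold cannot be contained in the Schottky divisor. Every component of the real multiplication locus is the image of the Hilbert modular variety $X_\Ord$ attached to an order $\Ord$ in a totally real quartic field $F$, and this image has dimension four. Suppose some such $W=X_\Ord$ lay generically in $\Jac(\moduli[4])$, so that its generic point is $\Jac(C)$ for a smooth genus-four curve $C$. Since $\Jac(\moduli[4])$ is a divisor in $\AVmoduli[4]$ (its dimension is $9=10-1$), a generic fourfold meets the Schottky divisor in a threefold rather than being contained in it, so the inclusion $W\subset\Jac(\moduli[4])$ is a strong rigidity constraint and not a coincidence of dimensions; the whole point is to exploit this rigidity at the boundary.

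First I would pass to the Hodge bundle. The $\Ord$-action splits the Hodge bundle over $W$ into four eigenline subbundles, one for each real embedding $\sigma_i\colon F\to\reals$. Fixing one embedding and normalising the resulting eigenform $\omega$ gives a map from a finite cover $\tilde W$ of $W$ into the eigenform locus $\eig\subset\pom[4]$, whose composite with the projection to $\moduli[4]$ dominates the image of $W$. The gain is that the flat geometry of $\omega$ encodes the arithmetic of $(F,\Ord)$ far more rigidly than $\Jac(C)$ alone: the period coordinates of $\omega$ are constrained by the multiplication by $\Ord$, and it is precisely these constrained periods whose degeneration I can control.

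Next comes the boundary analysis, which is the core of the argument. The cusps of $X_\Ord$ are described purely arithmetically, by $\Ord$-module data, and along a one-parameter family degenerating to a cusp the semiabelian limit acquires a torus part of prescribed rank and $\Ord$-structure. Taking closures in a toroidal compactification, $W\subset\Jac(\moduli[4])$ forces $\overline W\subset\overline{\Jac(\moduli[4])}$, so every such limit is also a generalised Jacobian of a stable genus-four curve $C_0$, and the limiting eigenform is a twisted differential on $C_0$ with poles confined to the nodes and residues obeying the matching conditions. The task is then to reconcile the two descriptions: the cusp data of $X_\Ord$ on one side, and the dual graph of $C_0$ together with the admissible zero/pole configuration of an eigenform on the other.

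The main obstacle, where I expect essentially all the work to lie, is finishing this matching by a case analysis. The stable genus-four curves are combinatorially limited, and for each dual graph the possible limiting twisted differentials are pinned down by the residue conditions; I would check that none is simultaneously realisable as an $\Ord$-eigenform limit at a cusp of $X_\Ord$. The clean way to package the resulting contradiction is numerical: the symmetry of the four eigenline bundles forces their first Chern classes to be proportional, which determines the relevant ratio of Hodge to boundary classes along $\overline W$ as a function of $F$, while any subvariety contained in $\overline{\Jac(\moduli[4])}$ must respect the slope geometry of $\barmoduli[4]$. The subtle part is to make the boundary contributions \emph{exact} rather than merely bounded, so that the predicted value strictly violates the $\barmoduli[4]$-constraint; this is exactly why the precise eigenform-degeneration picture from the boundary analysis, and not a mere dimension count, is indispensable.
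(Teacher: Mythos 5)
Your outline agrees with the paper's strategy up to a point: both argue by contradiction, pass to the boundary of $\barmoduli[4]$, observe that the limits are stable curves carrying eigenforms whose polar parts and residues are constrained by the $\Ord$-action, and reduce to a case analysis over dual graphs of geometric genus zero stable curves. But the step where you actually derive the contradiction is both different from the paper's and not established. The paper's contradiction is purely dimension-theoretic: since the Hilbert modular variety is non-compact, $\partial\RM[{\Ord,M}]$ must be a \emph{divisor}, so every irreducible component of it is three-dimensional and lies in a boundary stratum $\cS$ with at most three components; on the other hand, the residue (weight) conditions of Theorem~\ref{thm:RMboundary} confine such a component to $\RS[E]=\CR^{-1}(T_{\cS,E})$, and the bulk of the paper (a cross-ratio Torelli theorem, Theorem~\ref{thm:weaktorelli}, plus the graph-theoretic ``disjoint loop'' and ``shared vertex'' arguments and two ad hoc strata) shows $\dim(\CR(\cS)\cap T_{\cS,E})\le 2$. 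No Chern class or slope computation appears anywhere.

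Your proposed ``numerical packaging'' has concrete problems. First, the claim that the symmetry of the four eigenline bundles forces their first Chern classes to be proportional is false on a Hilbert modular fourfold: the four classes $c_1(\mathcal{L}_{\sigma_i})$ are in general linearly independent in $H^2$, and are permuted only when $\Gamma$ admits a Galois symmetry. Second, slope inequalities for $\barmoduli[4]$ constrain one-dimensional families, and you give no mechanism for extracting from the fourfold a test curve along which the boundary contribution is ``exact'' rather than merely bounded; you flag this yourself as the subtle part, but it is precisely the missing contradiction. Third, even granting the boundary matching, you would still need the combinatorial input the paper supplies: for each of the roughly twenty relevant dual graphs one must show that the locus of pointed rational curves realizing admissible RM residue data has dimension at most two, and the strata with separating nodes require a separate reduction (Proposition~\ref{prop:redtononsep}) resting on a conditional sufficiency statement, Theorem~\ref{thm:suff}, valid only under the hypothesis being contradicted. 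As written, the proposal identifies the right battlefield but leaves the decisive step unproved.
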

\par
In \cite{deJZhang} the analogous theorem is proved for genus greater than four
with part of the genus four case still left open. Together, the 
two results imply:
\par
\begin{cor}
  \label{cor:total}
  For every genus $g$ and every component of the real multiplication locus in the moduli
  space of $g$-dimsensional abelian varieties $\AVmoduli[g]$, the generic
point of the component does not lie in the image of the moduli space of curves $\moduli[g]$.
\end{cor}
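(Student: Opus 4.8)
The plan is to deduce the corollary as a formal combination of Theorem~\ref{thm:Intro} with the main result of de Jong and Zhang~\cite{deJZhang}, organised as a case distinction on the genus $g$.

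First I would recall the geometry behind the statement. A component of the real multiplication locus in $\AVmoduli[g]$ is a Hilbert modular variety $\RM$ attached to an order $\Ord$ in a totally real field of degree $g$; it has dimension $g$, whereas $\AVmoduli[g]$ has dimension $g(g+1)/2$. For $g \le 3$ the Torelli morphism $\moduli[g] \to \AVmoduli[g]$ is dominant, so the Schottky locus fills $\AVmoduli[g]$ and the assertion carries content only once $g \ge 4$, where the Schottky locus becomes a proper subvariety. It is exactly this range $g \ge 4$ that the two cited results cover between them, and I would make this partition explicit at the outset.

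For $g = 4$ the statement is precisely Theorem~\ref{thm:Intro}: no component of the real multiplication locus in $\AVmoduli[4]$ lies generically in the image of $\moduli[4]$. For every $g \ge 5$ the corresponding assertion is the theorem of de Jong and Zhang, which I would invoke as a black box, namely that no component of the real multiplication locus in $\AVmoduli[g]$ meets the Schottky locus in a dense subset. Since the two ranges $\{4\}$ and $\{g : g \ge 5\}$ exhaust the genera for which the Schottky locus is proper, concatenating the two statements yields the conclusion for every component in every $\AVmoduli[g]$ with $g \ge 4$, which is the corollary.

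Consequently the corollary presents no independent difficulty: the entire weight rests on the genus four case, that is, on Theorem~\ref{thm:Intro}, whose proof occupies the body of the paper. The only point I would verify in assembling the statement is that Theorem~\ref{thm:Intro} indeed disposes of \emph{all} components of the four-dimensional real multiplication locus, including those left untreated by the partial genus four result of~\cite{deJZhang}, so that no totally real quartic order is omitted; granting this, the combination is immediate.
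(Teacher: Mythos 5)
Your proposal is correct and matches the paper's own (implicit) argument exactly: the corollary is obtained by combining Theorem~\ref{thm:Intro} for $g=4$ with the de Jong--Zhang result for $g>4$, the latter invoked as a black box. Your additional remark that the statement only has content for $g\geq 4$ (since Torelli is dominant for $g\leq 3$) is a reasonable reading of the corollary's intended scope, which the paper leaves tacit.
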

\par
Besides completing the work of \cite{deJZhang}, we believe
our method of proof is interesting for the following reason. 
\par
The proofs in \cite{deJZhang} and \cite{Hain} ultimately rely on a
theorem of Farb and Masur \cite{farbmasur} that mapping class groups
do not contain fundamental groups of lattices in higher rank Lie
groups. Consequently, if the generic point of a component of the real
multiplication locus lies in the moduli space of curves, then the
Torelli map must modify the fundamental group of the real
multiplication locus, either by ramification along the hyperelliptic
locus, or by the locus of decomposable abelian varieties, which is
disjoint from the Schottky locus.  Whenever this can be ruled out,
e.g.\ by showing that the codimension of this intersection is at least
two, one has the desired contradiction.
\par
The proof here, on the contrary, relies on a study at the
boundary of the moduli space of curves. Since Hilbert modular varieties
are not compact, we may study their closure in the Deligne-Mumford
compactification. A counterexample to the theorem in genus four 
must have a component of dimension three in some Deligne-Mumford
boundary stratum of the moduli space of curves.
\par
These closures of Hilbert modular varieties were analyzed
in \cite{bainmoel}. If one uses cross-ratios as degenerate
period coordinates, the closure of a Hilbert modular variety
is contained in a subtorus of an ambient algebraic torus 
which we call the {\em RM-torus}.
So a first try to rule out counterexamples to the main theorem 
is to check if the images of Deligne-Mumford boundary strata in
cross-ratio coordinates contain tori of sufficiently large dimension.
\par
In fact, they do contain such large tori, but only when 
the tori are very degenerate, e.g.\ lying completely in
a coordinate hyperplane. The heart of the paper consists
in showing that RM-tori do not have this property. For that purpose, following
the ideas in  \cite{bainmoel}, we provide
the (dual graph of the) boundary stratum with {\em weights}
given by the residues of an eigenform for real multiplication. Only if
the weights satisfy the restrictive condition of
being admissible, the boundary stratum can lie in the closure
of the Hilbert modular variety.  This condition is
recalled in Theorem~\ref{thm:RMboundary}. 
\par
The obvious refinement of the above theorem, 
to understand the {\em dimension} of the real multiplication 
locus in $\AVmoduli$ with $\moduli$ (say for large $g$) is still an 
open problem. The techniques in this paper could contribute
to the solution of this problem, which is not tractable by 
methods based on the lattice properties of fundamental group.
\par
Section~\ref{sec:RM-tori} derives the key properties of RM-tori, 
e.g.\ a method to calculate their intersection dimension with
subtori in terms of the weights.
Section~\ref{sec:redtobd} contains the details of the strategy outlined above.
The main theorem is reduced in that section to showing 
for a list of relevant Deligne-Mumford boundary strata that the
RM-tori are not contained in the image of the stratum under
the cross-ratio maps. 
In Section~\ref{sec:CRnice} we show that cross-ratios are indeed coordinates
near the boundary of a relevant boundary stratum if this
boundary stratum does not parametrize curves with a separating node.
In Section~\ref{sec:check} we give graph-theoretic criteria for
the desired non-containment statement 
and thus complete the argument for relevant boundary strata
parameterizing curves without a separating node.
In the last section we deal with boundary strata that parametrize curves 
with a separating node and reduce this to a case previously dealt with.
\par
%\\In the last section, we show that the necessary condition 
%\\for a boundary point to lie in the closure of the real multiplication
%\\locus is also sufficient for many strata. We use this to show that
%\\the closure of a component of the real multiplication locus cannot
%\\be contained only in boundary strata parameterizing curves with a 
%\\separating node.

%%%%%%%%%%%%%%%%%%%%%%%%%%%%%%%%%%%%%%%%%%%%%%%%%%%%%%%%%%%%%%%%%%%%%%%%%%%%%%%%%%%%
\section{Boundary of the real multiplication locus}
\label{sec:boundary}
%%%%%%%%%%%%%%%%%%%%%%%%%%%%%%%%%%%%%%%%%%%%%%%%%%%%%%%%%%%%%%%%%%%%%%%%%%%%%%%%%%%%

In this section, we summarize properties of the real multiplication locus and their boundaries which
will be needed in later sections.  Throughout this section, $F$ will denote a totally real number
field of degree $g$, and $\mathcal{O}$ will denote an order in $F$ (a subring which has rank $g$ as
an Abelian group).

\paragraph{The real multiplication locus.}

We denote by $\RA\subset\A$ the locus of
Abelian varieties which have real multiplication by $\mathcal{O}$.  This locus is an immersed quotient of a
Hilbert modular variety by a finite group of automorphisms.  We denote by $\RM\subset\moduli$ the
locus of Riemann surfaces whose Jacobians have real multiplication by $\mathcal{O}$.  In other
words, $\RM = t^{-1}(\RA)$, where $t \colon \moduli\to\A$ is the Torelli map.

\paragraph{Irreducible components.}

Given an Abelian variety $A$ with real multiplication $\mathcal{O}$, the homology group $H_1(A;
\zed)$ has the structure of an $\mathcal{O}$-module with a compatible symplectic structure, and the
isomorphism classes of such modules parametrize the irreducible components of $\RA$.

More precisely, consider a torsion-free $\mathcal{O}$-module $M$.  The \emph{rank} of $M$ is the
dimension of $M\otimes \ratls$ as a vector space over $F$.  We say that $M$ is proper if the
$\mathcal{O}$-module structure doesn't extend to a larger order.  A \emph{symplectic
  $\mathcal{O}$-module} is a torsion-free $\mathcal{O}$-module equipped with a unimodular symplectic
form satisfying $\langle x, \lambda y\rangle = \langle \lambda x, y\rangle$ for each $x,y\in M$ and
$\lambda\in \mathcal{O}$.

Given a torsion-free, proper, rank-two, symplectic $\mathcal{O}$-module $M$, we define
$\RA[\mathcal{O}, M]\subset\RA$ to be the locus of Abelian varieties whose first homology is
isomorphic to $M$ as a symplectic $\mathcal{O}$-module.  $\RA[\mathcal{O}, M]$ is isomorphic to a
finite quotient of a Hilbert modular variety $\half^g/\Gamma$ for some $\Gamma$ commensurable with
$\SL_2(\mathcal{O})$, so it is irreducible.  Thus the irreducible components of $\RA$ are parametrized
by isomorphism classes of such $M$.

\paragraph{Cusps.}

A \emph{lattice} $\mathcal{I}$ in $F$ is a rank $g$ additive subgroup
$\mathcal{I}\subset F$.  The \emph{coefficient ring} of $\mathcal{I}$ is the order $\mathcal{O}_\mathcal{I}$ defined
by
\begin{equation*}
  \mathcal{O}_\mathcal{I} = \{a \in F : ax\in \mathcal{I} \text{ for all } x \in \mathcal{I}\}.
\end{equation*}

The \emph{inverse different} of $\mathcal{I}$ is the lattice $\mathcal{I}^\vee$ defined by
\begin{equation*}
  \mathcal{I}^\vee = \{x\in F : \langle x, y\rangle \in \zed \text{ for all } y \in \mathcal{I}\},
\end{equation*}
also having coefficient ring $\mathcal{O}_\mathcal{I}$.  Here and throughout the paper we use the
notation $\langle x, y\rangle = \Tr(xy)$ for the trace pairing.

Consider a rank-two symplectic $\mathcal{O}$-module $M$ and a lattice $\mathcal{I}$ whose
coefficient ring contains $\mathcal{O}$.  An exact sequence of $\mathcal{O}$-modules,
\begin{equation}
  \label{eq:2}
  0\to\mathcal{I}\to M \to \mathcal{I}^\vee \to 0,
\end{equation}
expresses $M$ as an extension of $\mathcal{I}^\vee$ by $\mathcal{I}$.  The sequence \eqref{eq:2}
splits as a sequence of Abelian groups, yielding a group isomorphism $\mathcal{I} \oplus
\mathcal{I}^\vee\to M$.  The module $\mathcal{I}\oplus\mathcal{I}^\vee$ carries a natural symplectic
structure, defined by
\begin{equation*}
  \langle (a, b), (c, d)\rangle = \Tr(ad - bc).
\end{equation*}
This induces a symplectic structure on $M$ which does not depend on the choice of splitting of
\eqref{eq:2}.  We define $E(\mathcal{I}, M)$ to be the set of isomorphism classes of extensions
\eqref{eq:2} such that the induced symplectic form on $M$ agrees with the given one.

\paragraph{Stable forms.}

Consider a stable curve $X$, and let $X'\subset X$ be the complement of the nodes.  A \emph{stable
  form} on $X$ is a holomorphic one-form on $X'$ which has at worst simple poles at the cusps of
$X'$, with opposite residues at two cusps which share a node.

% \paragraph{Lagrangian markings.}

% Consider a Riemann surface $X$ of genus $g$ and a rank-$g$ Abelian
% group $\mathcal{I}$ (usually $\mathcal{I}$ will be a lattice in $F$).  A Lagrangian subgroup
% $L\subset H_1(X; \zed)$ is a subgroup $L$ on which the intersection pairing is trivial and $H_1(X;
% \zed)/L$ is torsion-free.  A \emph{Lagrangian marking} of $X$ by $\mathcal{I}$ is a monomorphism
% $\rho\colon \mathcal{I}\to H_1(X; \zed)$ whose image is a Lagrangian subgroup.

% We define $\moduli(\mathcal{I})$ to be the moduli space of genus $g$ Riemann surfaces $X$ equipped
% with a Lagangian marking by $\mathcal{I}$.  We denote by
% $\barmoduli(\mathcal{I})\supset\moduli(\mathcal{I})$ the space obtained by adjoining those stable curves
% where a set of disjoint simple closed curves whose homology class belongs to the image of
% $\mathcal{I}$ have been pinched.  There are natural morphisms $\moduli(\mathcal{I})\to\moduli$ and
% $\barmoduli(\mathcal{I})\to\barmoduli$ which forget the markings.  See \cite{bainmoel} for details.

\paragraph{Weighted stable curves.}

Consider a lattice $\mathcal{I}$ in a totally real number field $F$ of degree $g$.  An
$\mathcal{I}$-weighted stable curve is an arithmetic genus $g$, geometric genus $0$ stable curve
$X$, together with an element of $\mathcal{I}$ assigned to each cusp of $X'$ (the complement of the
nodes), called the weight of that cusp, subject to the following restrictions:
\begin{itemize}
\item Cusps of $X'$ sharing a node have opposite weight.
\item The sum of weights of a component of $X'$ is zero.
\item The weights span $\mathcal{I}$.
\end{itemize}
One could think of a weighted stable curve as a curve together with a stable form whose residues
belong to $F$.

When we do not care to specify the lattice $\mathcal{I}$, we may speak of a $F$-weighted stable
curve, or just a weighted stable curve.

Two weighted stable curves are \emph{isomorphic} (resp.\ \emph{topologically equivalent}) if there
is a weight preserving isomorphism (resp.\ homeomorphism) between the underlying stable curves.
%Two weighted stable curves are \emph{similar} if there is an isomorphism between them which sends
%each weight $w$ to $\lambda w$ for some $\lambda\in F$.

If we don't want to specify the ideal (or the field) the weights span we
just talk of {\em $F$-weighted} (or just {\rm weighted}) stable curves, 
with the implicit meaning that the weights span a lattice $\mathcal{I}$.

\paragraph{Weighted boundary strata.}

We define an \emph{$\mathcal{I}$-weighted boundary stratum} $\mathcal{S}$ to be the moduli space of
$\mathcal{I}$-weighted stable curves which are topologically equivalent to some fixed weighted
stable curve $X$.  If $X$ has $m$ components, each having $n_i$ cusps with all weights distinct,
then the $\mathcal{S}$ is isomorphic to
\begin{equation*}
  \prod_i \moduli[0, n_i],
\end{equation*}
where $\moduli[0,n]$ is the moduli space of $n$ labelled points on $\proj^1$.  If some weights
coincide, the stratum may be a quotient of this product. There is a canonical morphism
$\mathcal{S}\to\barmoduli$ which forgets the weights.

%We say that two weighted boundary strata are \emph{similar} if they parametrize similar stable
%curves.

An $\mathcal{I}$-weighted boundary stratum, or equivalently the topological type of a weighted
stable curve may be encoded by a directed graph with edges weighted by elements of $\mathcal{I}$.
Given a stratum $\mathcal{S}$ parameterizing weighted curves topologically equivalent to $X$, we
write $\Gamma(\mathcal{S})$ for the graph with one vertex for each component of $X$, with an edge
joining two vertices if the corresponding components are joined by a node (the \emph{dual graph}).
Contrary to usual practice, we allow graphs where an edge joins a vertex to itself, or where
multiple edges join the same pair of vertices.  We label each edge with the weight of the
corresponding node and an arrow pointing to the component with that weight (as opposed to its
negative).  We call such an object an \emph{$\mathcal{I}$-weighted graph}.  Two graphs which are
related by changing the orientation of an edge and simultaneously the sign of its weight represent
the same weighted boundary stratum, and we regard two such weighted graphs to be the same.
\par
A \emph{degeneration} of a weighted boundary stratum $\mathcal{S}$ is a stratum obtained by pinching
one or more simple closed curves on stable curves parametrized by $\mathcal{S}$.  A degeneration
$\mathcal{S}'$ of $\mathcal{S}$ can be regarded as part of the boundary of the Deligne-Mumford
compactifiction of $\mathcal{S}$.  On the level of dual graphs, degenerations of $\mathcal{S}$ are
obtained by gluing an edge into a vertex $v$ of $\Gamma(\mathcal{S})$.  More precisely, we replace
the vertex $v$ two vertices $v_1$ and $v_2$ joined by an edge $e$, with each edge meeting $v$ now
meeting either $v_1$ or $v_2$.  We assign $e$ the unique weight which is consistent with the axioms
of a weighted graph.

\paragraph{Periods.}

Consider an $\mathcal{I}$-weighted boundary stratum $\mathcal{S}$.  We recall here a coordinate-free
analogue of classical period matrices for weighted stable curves introduced in \cite{bainmoel}.

Given any ring $R$ and module $M$ over $R$, we define $\Sym_R(M)$ to be the submodule of $M
\otimes_R M$ fixed by the involution $\theta(x\otimes y) = y \otimes x$.  We define $\bS_R(M)$ to be
the quotient of $M\otimes_R M$ by the submodule generated by the relations $\theta(z) - z$.

We identify the field $F$ with its dual via the trace pairing; thus the vector
spaces $\Sym_\ratls(F)$ and $\bS_\ratls(F)$ are dual via the pairing
\begin{equation*}
  \langle a \otimes b, c \otimes d\rangle = \langle a, c\rangle\langle b, d\rangle,
\end{equation*}
as are the groups $\Sym_\zed(\mathcal{I})$ and
$\bS_\zed(\mathcal{I}^\vee)$. 

Let $W(\mathcal{S})\subset \Sym_\ratls(F)$ be the subspace generated by the elements $r \otimes r$
for $r$ running over the weights of $\mathcal{S}$.  Let $N(\mathcal{S})\subset\bS_\ratls(F)$ be the
annihilator of $W(\mathcal{S})$.

We defined in \cite{bainmoel} a homomorphism 
\begin{equation*}
  \Psi\colon N(\mathcal{S})\cap \bS_\zed(\mathcal{I}^\vee) \to \Hol^*(\mathcal{S}),
\end{equation*}
where the image is the multiplicative group of nonzero holomorphic
functions on $\mathcal{S}$.  Each $\Psi(x)$ is a holomorphic function
on $\mathcal{S}$ which arises as a limit of an exponential of a
classical period matrix entry.  We describe here $\Psi(x)$ when $x$ is
an elementary tensor $\alpha\otimes\beta$, and refer the reader to
\cite{bainmoel} for a more careful definition.

Consider $\alpha\otimes\beta\in N(\mathcal{S})\cap\bS_\zed(\mathcal{I}^\vee)$ and a weighted stable
curve $X\in\mathcal{S}$.  Pairing $\alpha$ with the weights of $X$ associates an integer to each
cusp of $X$.  There is a unique stable form on $X$ with these residues at the cusps, which
we call $\omega_X$.  Similarly, pairing $\beta$ with the weights associates an integer to each cusp,
and we may choose a path $\gamma$ on $X$ whose algebraic intersection number with each node is given
by these integers.  We define
\begin{equation} \label{eq:PsiDef}
  \Psi(\alpha\otimes\beta)(X) = e^{\int_\gamma \omega_X}.
\end{equation}
Since $\alpha\otimes\beta$ belongs to $N(\mathcal{S})$, we may choose $\gamma$ to not pass though
any nodes at which $\omega_X$ has a pole, so this integral is finite and well-defined.  It may be
checked directly that $\Psi(\alpha\otimes\beta) = \Psi(\beta\otimes\alpha)$, or more conceptually
this follows from the symmetry of period matrices of nonsingular curves by a degeneration argument.

The function $\Psi(x)$ is always a product of various cross-ratios of points on components the
stable curve. 

\paragraph{The necessary condition.}

We now recall the necessary condition for a stable curve to lie in the boundary of the real
 multiplication locus $\RM$.

Consider an $\mathcal{I}$-weighted boundary stratum $\mathcal{S}$.  In $\bS_\ratls(F)$, we define
the cone
\begin{equation*}
  C(\mathcal{S}) = \{ x \in \bS_\ratls(F) : \langle x, r \otimes r\rangle \geq 0 \text{ for each
    weight $r$ of $\mathcal{S}$}\}.
\end{equation*}

The space $F\otimes_\ratls F$ has the structure of an $F$-bimodule.  We define
\begin{equation*}
  \Lambda^1  = \{x \in F \otimes_\ratls F : \lambda\cdot x = x\cdot\lambda \text{
    for each $\lambda\in F$}\}.
\end{equation*}
In fact, $\Lambda^1$ is contained in $\Sym_\ratls(F)\subset F\otimes_\ratls F$ (see
\cite[Proposition~5.1]{bainmoel}).  We define $\Ann(\Lambda^1)\subset\bS_\ratls(F)$ to be the
annihilator of $\Lambda^1$.

We say that the weighted stratum $\mathcal{S}$ is \emph{admissible} if
\begin{equation*}
  C(\mathcal{S}) \cap \Ann(\Lambda^1) \subset N(\mathcal{S}).
\end{equation*}

We associate to an admissible stratum $\mathcal{S}$ various algebraic tori.  We define the
\emph{ambient torus} $A_\mathcal{S}$ by
\begin{equation*}
  A_\mathcal{S} = \Hom_\zed(N(\mathcal{S})\cap \mathcal{S}_\zed(\mathcal{I}^\vee), \Gm).
\end{equation*}
(Readers unfamiliar with algebraic groups should regard $\Gm$ as the
multiplicative group of nonzero complex numbers.)  The homomorphism
$\Psi\colon N(\mathcal{S})\cap
\bS_\zed(\mathcal{I}^\vee)\to\Hol^*(\mathcal{S})$ determines a
canonical morphism $\CR\colon\mathcal{S}\to A_\mathcal{S}.$

There is a surjective map of algebraic tori:
\begin{equation} \label{eq:defofp}
  p\colon A_\mathcal{S} \to \Hom(N(\mathcal{S})\cap
  \Ann(\Lambda^1)\cap \bS_\zed(\mathcal{I}^\vee), \Gm).
\end{equation}
We define the \emph{real multiplication torus} (or \emph{RM-torus}) $T_\mathcal{S}$ to be the subtorus
$T_\mathcal{S} = p^{-1}(0)\subset A_\mathcal{S}$.  More generally, there is a function
\begin{equation*}
  q\colon E(\mathcal{I},M) \to \Hom(N(\mathcal{S})\cap
  \Ann(\Lambda^1)\cap \bS_\zed(\mathcal{I}^\vee), \Gm),
\end{equation*}
with image in the set of torsion points, which is defined in \cite[\S5]{bainmoel}; we refer the
reader to that paper for the definition, as it is not needed here.  Given an extension $E\in
E(\mathcal{I},M)$, we define the \emph{translated RM-torus} $T_{\mathcal{S},E}$ by
$T_{\mathcal{S},E} = p^{-1}(q(E))\subset A_\mathcal{S}$.  Given an extension $E$, we define the
subvariety $\RS\subset\mathcal{S}$ to be the inverse image of $T_{\mathcal{S},E}$ under $\CR$.

We can now state our necessary condition for a geometric genus zero stable curve to lie in the
boundary of the real multiplication locus.  See \cite[\S5]{bainmoel} for the proof.

\begin{theorem}
  \label{thm:RMboundary}
  If a geometric genus zero stable curve $X\in\barmoduli$ lies in the boundary of 
  $\RM[\mathcal{O}, M]$, then there is a lattice $\mathcal{I}\subset F$ whose coefficient ring contains
  $\mathcal{O}$ and an extension $E\in E(\mathcal{I}, M)$ such that $X$ is in
  the image of $\RS$ under the forgetful map $\mathcal{S}\to \barmoduli$ for some
  admissible $\mathcal{I}$-weighted boundary stratum $\mathcal{S}$.
\end{theorem}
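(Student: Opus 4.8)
The plan is to establish Theorem~\ref{thm:RMboundary} by tracking a degenerating family of curves inside $\RM[\mathcal{O}, M]$ approaching the boundary point $X$, and reading off the limiting period data in the coordinate-free language set up above. Since $X$ has geometric genus zero, all of the arithmetic genus is carried by the nodes, so the entire Hodge-theoretic content degenerates into residue and cross-ratio data; the task is to show this limiting data is exactly the structure encoded by an admissible weighted stratum together with an extension class.

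First I would fix a smooth curve $Y$ near the boundary whose Jacobian has real multiplication by $\mathcal{O}$ and whose first homology is $M$ as a symplectic $\mathcal{O}$-module. As $Y$ degenerates to $X$, the vanishing cycles span a Lagrangian-type sublattice of $H_1(Y;\zed)=M$, and I would identify this sublattice with a lattice $\mathcal{I}\subset F$; its orthogonal complement under the symplectic form, realized through the trace pairing, is forced to be $\mathcal{I}^\vee$. The $\mathcal{O}$-action must preserve the monodromy-invariant filtration, so $\mathcal{I}$ is an $\mathcal{O}$-submodule and hence has coefficient ring containing $\mathcal{O}$; the resulting short exact sequence $0\to\mathcal{I}\to M\to\mathcal{I}^\vee\to 0$ of $\mathcal{O}$-modules is the extension class $E\in E(\mathcal{I},M)$. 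The residues of the limiting eigenform for real multiplication provide the weights at the cusps, and the three axioms (opposite weights at a node, zero sum on each component, spanning $\mathcal{I}$) are precisely the constraints that a stable eigenform must satisfy, so $X$ determines an $\mathcal{I}$-weighted boundary stratum $\mathcal{S}$.

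Next I would show admissibility and the containment in $\RS$. The key input is that real multiplication forces the classical period matrix to lie in the subspace $\Lambda^1$ of $F\otimes_\ratls F$ commuting with the $F$-action; passing to the annihilator $\Ann(\Lambda^1)$ and taking the limit through the homomorphism $\Psi$ of \eqref{eq:PsiDef}, the surviving period coordinates are exactly those in $N(\mathcal{S})\cap\Ann(\Lambda^1)\cap\bS_\zed(\mathcal{I}^\vee)$. The condition that these limits exist and are finite—that no forbidden period blows up—translates into the cone condition $C(\mathcal{S})\cap\Ann(\Lambda^1)\subset N(\mathcal{S})$, which is the definition of admissibility. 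The value of the map $p\circ\CR$ on the limiting configuration is then computed to equal $q(E)$, the torsion point attached to the extension, so $X$ lands in $\CR^{-1}(T_{\mathcal{S},E})=\RS$, as claimed.

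The main obstacle will be the analytic control of the degeneration: making precise that the limit of the exponentiated period matrix entries under $\Psi$ exists, that the only entries surviving are those annihilating $\Lambda^1$, and that the cone inequalities $\langle x, r\otimes r\rangle\geq 0$ emerge from the sign of the imaginary part of period matrix entries as the curve pinches. This requires the nilpotent-orbit/asymptotics analysis carried out in \cite{bainmoel}, together with a careful check that the symplectic and $\mathcal{O}$-module structures are respected in the limit so that the abstract extension class $E$ is well-defined independently of the chosen degenerating family. Since the full argument is given in \cite[\S5]{bainmoel}, I would cite it for these technical limit computations and focus the exposition on the identification of $\mathcal{I}$, $E$, and the admissibility condition.
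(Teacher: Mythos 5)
Your proposal is correct and takes essentially the same route as the paper: the paper offers no argument of its own for Theorem~\ref{thm:RMboundary} beyond the citation ``See \cite[\S5]{bainmoel} for the proof,'' and your sketch---vanishing cycles giving the $\mathcal{O}$-submodule $\mathcal{I}$ with quotient $\mathcal{I}^\vee$ and extension class $E$, residues of the limiting eigenform giving the weights, and the positivity/finiteness of limiting periods giving the cone condition defining admissibility---is a faithful outline of that reference, to which you likewise defer the technical nilpotent-orbit asymptotics. (One small imprecision: $\mathcal{I}^\vee$ arises as the quotient $M/\mathcal{I}$ dual to the Lagrangian $\mathcal{I}$ via unimodularity, not as its symplectic orthogonal complement, which is $\mathcal{I}$ itself.)
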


\paragraph{Loops in weighted graphs.}

Consider a geometric genus zero weighted stable curve $X$ lying in a stratum $\mathcal{S}$.  There
is a natural bijection between loops in the weighted graph $\Gamma(\mathcal{S})$ and homotopy
classes of loops on $X$.  We now describe a method to construct elements of $N(\cS)$ from pairs of
loops in the weighted graph $\Gamma(\mathcal{S})$.

We say that two loops in a graph are {\em edge-disjoint}, if they do not share an edge. If they do
not share a vertex, we call them {\em vertex-disjoint}.  We say that a loop is \emph{simple} if it
meets each vertex at most once.

To a loop $\gamma$ in $\Gamma(\mathcal{S})$, define
a functional $\lambda^*(\gamma)\in \Hom_\ratls(F, \ratls)$ so that for any edge $e$ having weight
$r$, $\lambda^*(\gamma)(r)$ is the number of times $\gamma$ traverses $e$ in the positive direction
minus the number of times $\gamma$ traverses $e$ in the negative direction.  This defines a 
functional on $F$ as the weights span $F$.  It is linear and well defined by the properties of a weighted
stable curve.

We define $\lambda(\gamma)\in F$ to be the unique element such that
$$\langle\lambda(\gamma),x\rangle= \lambda^*(\gamma)(x)$$
for all $x\in F$.  If we think of $\gamma$ as a loop on the stable curve $X$, and if $n$ is a node
of $X$ having weight $r$, then $\langle\lambda(\gamma), r\rangle$ is the algebraic intersection
number of $\gamma$ with $n$.  We may also regard $\lambda$ as an isomorphism $\lambda\colon
H_1(\Gamma(\mathcal{S}); \zed)\to \mathcal{I}^\vee$.

We can calculate $\lambda(\gamma)$ explicitly as follows.  Choose edges $e_1, \ldots, e_g$ of
$\Gamma(\mathcal{S})$ having corresponding weights $r_1, \ldots, r_g\in F$ such that these $r_i$
form a basis of $F$ over $\ratls$.  Let $s_1, \ldots, s_g$ be the dual basis of $F$ with respect to
the trace pairing.  Then $\lambda(\gamma) = \sum n_i s_i$, where $n_i$ is the number of times
$\gamma$ traverses $e_i$, crossings with the opposite orientation counted negatively.

If a simple loop $\gamma$ passes through a vertex $v$, we denote by
$\gamma^{\ii}(v)$ (resp.\ $\gamma^{\out}(v)$) the incoming (resp.\ outgoing)
marked point on the component of the stable curve corresponding to $v$.

Given $a,b,c,d\in\cx$, recall that their cross-ratio is defined by
\begin{equation*}
  [a,b,c,d] = \frac{(a-c)(b-d)}{(a-d)(b-c)}.
\end{equation*}
\par
\begin{lemma}
  \label{le:NSloops}
  Suppose the loops $\gamma_1$ and $\gamma_2$ in
  $\Gamma(\mathcal{S})$ are edge-disjoint.  Then the corresponding element $a=\lambda(\gamma_1)
  \otimes \lambda(\gamma_2) \in \bS(F)$ lies in $N(\cS)$. Moreover, if the $\gamma_i$ are simple,
  then the function $\Psi(a)$ is a product of cross-ratios
  \begin{equation*}
    \Psi(a) = \prod_{v \in \gamma_1 \cap \gamma_2}  
[\gamma_1^{\out}(v),\gamma_1^{\ii}(v),\gamma_2^{\out}(v),\gamma_2^{\ii}(v)].
  \end{equation*}
  
\end{lemma}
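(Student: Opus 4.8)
The plan is to treat the two assertions separately: the membership $a\in N(\cS)$ is a one-line pairing computation, while the cross-ratio formula is a localized residue calculation on the genus-zero components.

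For the membership, I would use that $N(\cS)$ is by definition the annihilator of $W(\cS)\subset\Sym_\ratls(F)$, and that $W(\cS)$ is spanned by the $\theta$-invariant tensors $r\otimes r$ as $r$ ranges over the edge weights (since $(-r)\otimes(-r)=r\otimes r$, one weight per edge suffices). The duality pairing then gives
$$\langle \lambda(\gamma_1)\otimes\lambda(\gamma_2),\, r\otimes r\rangle=\langle\lambda(\gamma_1),r\rangle\,\langle\lambda(\gamma_2),r\rangle,$$
which descends from $F\otimes F$ to a pairing on $\bS_\ratls(F)\times\Sym_\ratls(F)$ precisely because the right-hand tensor $r\otimes r$ is fixed by $\theta$. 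By the definition of $\lambda$, each factor $\langle\lambda(\gamma_i),r\rangle$ equals $\lambda^*(\gamma_i)(r)$, the signed number of times $\gamma_i$ traverses the edge $e$ of weight $r$. Edge-disjointness forces at least one of these counts to vanish for every edge, so the product is zero on each generator of $W(\cS)$ and hence $a\in N(\cS)$.

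For the formula, I would specialize \eqref{eq:PsiDef} with $\alpha=\lambda(\gamma_1)$ and $\beta=\lambda(\gamma_2)$. Then $\omega_X$ is the stable form $\omega_{\gamma_1}$ whose residue at a cusp of weight $w$ is the intersection number $\langle\lambda(\gamma_1),w\rangle$ of $\gamma_1$ with the node, and $\gamma_2$ itself is a legitimate choice of the path $\gamma$: its intersection numbers with the nodes are exactly $\langle\lambda(\gamma_2),w\rangle$, and by edge-disjointness it avoids every node where $\omega_{\gamma_1}$ has a pole, which is the finiteness condition required in \eqref{eq:PsiDef}. Thus $\Psi(a)(X)=\exp\int_{\gamma_2}\omega_{\gamma_1}$, and I would split this integral over the components $C_v\cong\proj^1$ met by $\gamma_2$. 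On such a $C_v$, simplicity of $\gamma_1$ means $\gamma_1$ passes through $v$ at most once, entering and leaving through two distinct nodes; hence $\omega_{\gamma_1}|_{C_v}$ has residue $+1$ at $\gamma_1^{\out}(v)$, residue $-1$ at $\gamma_1^{\ii}(v)$, and no other poles (the residues on $\proj^1$ summing to zero). In particular $\omega_{\gamma_1}|_{C_v}\equiv 0$ when $v\notin\gamma_1$, so only the vertices $v\in\gamma_1\cap\gamma_2$ contribute. Writing $\omega_{\gamma_1}|_{C_v}=d\log\bigl((z-\gamma_1^{\out}(v))/(z-\gamma_1^{\ii}(v))\bigr)$ and integrating along the arc of $\gamma_2$ from $\gamma_2^{\ii}(v)$ to $\gamma_2^{\out}(v)$, then exponentiating, produces exactly $[\gamma_1^{\out}(v),\gamma_1^{\ii}(v),\gamma_2^{\out}(v),\gamma_2^{\ii}(v)]$; the integral is path independent since the integrand is a logarithmic differential with no zeros or poles along the arc, the endpoints being distinct from the poles again by edge-disjointness. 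Multiplying over $v\in\gamma_1\cap\gamma_2$ yields the claim.

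The main obstacle is the orientation bookkeeping in the residue step: confirming that the residue of $\omega_{\gamma_1}$ is $+1$ at the \emph{outgoing} point $\gamma_1^{\out}(v)$ and $-1$ at the incoming one, and that the arc of $\gamma_2$ is oriented from $\gamma_2^{\ii}(v)$ to $\gamma_2^{\out}(v)$ compatibly. These signs must be tracked carefully through the conventions defining $\lambda$, the arrow orientation of the edge weights, and the algebraic intersection pairing, because reversing either one alone would replace the cross-ratio by its reciprocal. Their compatibility is exactly what pins down the stated ordering of the four points; all remaining steps (reduction to $\proj^1$, exactness, and path independence) are routine once these conventions are fixed.
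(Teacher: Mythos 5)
Your proposal is correct and follows essentially the same route as the paper: the membership claim is the same pairing computation using edge-disjointness, and the cross-ratio formula is obtained, as in the paper, by writing $\Psi(a)=\exp\int_{\gamma_2}\omega_{\gamma_1}$ and evaluating the logarithmic integral component by component (the paper just normalizes $\gamma_1^{\ii}(v)=0$, $\gamma_1^{\out}(v)=\infty$ so that $\omega=dz/z$). Note that your residue convention ($+1$ at $\gamma_1^{\out}(v)$, $-1$ at $\gamma_1^{\ii}(v)$) is the opposite of the paper's, but since you also orient the arc of $\gamma_2$ oppositely (from $\gamma_2^{\ii}(v)$ to $\gamma_2^{\out}(v)$), the two flips cancel and both computations yield the stated cross-ratio.
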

\par
\begin{proof}
  Consider an edge $e$ having weight $r$.  Since the loops share no edges, one of the $\gamma_i$ does not
  pass through $e$.  We then have $\langle \lambda(\gamma_i), r\rangle = 0$, so
  $$\langle\lambda(\gamma_1)\otimes\lambda(\gamma_2), r\otimes r\rangle = 0.$$  It follows that
  $\lambda(\gamma_1)\otimes\lambda(\gamma_2)\in N(\mathcal{S})$.

  We consider now $\gamma_1$ and $\gamma_2$ as loops on a stable curve
  $X$ in $\mathcal{S}$.  There is a unique stable form $\omega$ on $X$
  with poles of residue $-1$ at each $\gamma_1^\out(v)$ and residue
  $1$ at each $\gamma_1^\ii(v)$.  By definition, $\Psi(a) =
  e^{\int_{\gamma_2} \omega}$.

  Fix a component of $X$ corresponding to a vertex $v$, which we identify with the Riemann sphere
  punctured at finitely many points.  Normalizing so that $\gamma_1^\ii(v) = 0$ and
  $\gamma_2^\out(v) = \infty$, we have $\omega = \frac{dz}{z}$ on this component and the
  corresponding term of $\int_{\gamma_2}\omega$ is
  \begin{equation*}
    \int_{\gamma_2^\out(v)}^{\gamma_2^\ii(v)}\frac{dz}{z} = \log\frac{\gamma_2^\ii(v)}{\gamma_2^\out(v)} = \log
    [\gamma_1^{\out}(v),\gamma_1^{\ii}(v),\gamma_2^{\ii}(v),\gamma_2^{\out}(v)]^{-1}. \qedhere
  \end{equation*}
\end{proof}

%%%%%%%%%%%%%%%%%%%%%%%%%%%%%%%%%%%%%%%%%%%%%%%%%%%%%%%%%%%%%%%%%%%%%%%%%%%%%%%%%%%%
\section{Properties of the RM-tori}
\label{sec:RM-tori}
%%%%%%%%%%%%%%%%%%%%%%%%%%%%%%%%%%%%%%%%%%%%%%%%%%%%%%%%%%%%%%%%%%%%%%%%%%%%%%%%%%%%

We now study in more detail the tori $T_\mathcal{S}$ and $A_\mathcal{S}$ introduced in
\S\,\ref{sec:boundary}, computing their dimension, as well of the dimension of the intersection of
$T_\mathcal{S}$ with various subtori of $A_\mathcal{S}$.

\begin{theorem}
  \label{thm:dimension_N}
  Consider a $F$-weighted boundary stratum $\mathcal{S}$ having genus $g$, among whose weights are
  exactly $n$ distinct weights (up to sign) $r_1,\ldots,r_n$. Then the elements $r_1\otimes r_1, \ldots,
  r_n\otimes r_n$ of $\Sym_\ratls(F)$ are linearly independent over $\ratls$.

  Equivalently, $N(\mathcal{S})$ and $A_\mathcal{S}$ have dimension $g(g+1)/2 -n$.
\end{theorem}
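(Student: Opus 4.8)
The two assertions of the theorem are equivalent by a formal computation, so I would dispose of that first. Since $\Sym_\ratls(F)$ has dimension $g(g+1)/2$ and $N(\mathcal{S}) = \Ann(W(\mathcal{S}))$ is its annihilator inside the dual space $\bS_\ratls(F)$, one has $\dim N(\mathcal{S}) = g(g+1)/2 - \dim W(\mathcal{S})$. Moreover $N(\mathcal{S}) \cap \bS_\zed(\mathcal{I}^\vee)$ is a full lattice in $N(\mathcal{S})$, so $\dim A_\mathcal{S} = \rank\big(N(\mathcal{S}) \cap \bS_\zed(\mathcal{I}^\vee)\big) = \dim_\ratls N(\mathcal{S})$. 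As $W(\mathcal{S})$ is spanned by the $n$ elements $r_i \otimes r_i$, all of these quantities equal $g(g+1)/2 - n$ if and only if the $r_i \otimes r_i$ are linearly independent. Thus everything reduces to the independence statement.

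The plan is to translate independence into a statement about the dual graph $\Gamma = \Gamma(\mathcal{S})$. The isomorphism $\lambda\colon H_1(\Gamma; \zed) \to \mathcal{I}^\vee$ extends to an identification $F \cong H_1(\Gamma; \ratls) =: Z$ of the cycle space, under which the weight $r_i$ becomes the functional $\pi_i \in Z^\vee$ recording the net coefficient of an edge of class $i$ in a cycle; two edges lie in the same class precisely when these functionals agree up to sign, and nonzero weights (no bridges) guarantee $\pi_i \neq 0$. Under this identification $r_i \otimes r_i$ becomes $\pi_i \otimes \pi_i \in \Sym^2(Z^\vee)$, so the claim is that the points $[\pi_i] \in \proj(Z^\vee)$ impose independent conditions on quadrics. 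Two further equivalent forms guide the argument: since the trace form on the totally real field $F$ is positive definite, independence holds iff the Gram matrix $(\Tr(r_i r_j)^2)_{i,j}$ is nonsingular; and dually it holds iff no nonzero diagonal quadratic form $\sum_i c_i x_{e_i}^2$, in the coordinates of one representative edge $e_i$ per class, vanishes identically on $Z \subseteq \ratls^{E}$. I would take this last, purely graph-theoretic statement as the \emph{core}.

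The main tool I would develop is the dictionary between linear relations among the $\pi_e$ and the cut space: a relation $\sum_{e \in E'} a_e \pi_e = 0$ on $Z$ is exactly a coboundary supported on $E'$, which yields $\dim \Span\{\pi_e : e \in E'\} = |E'| - \#\mathrm{comp}(\Gamma \setminus E') + 1$. This already settles the low-order instances of general position cleanly. No two classes are proportional: $\pi_{e} = c\,\pi_{e'}$ forces $\{e,e'\}$ to be a two-edge cut and $c = \pm 1$, so distinct classes give distinct points $[\pi_i]$. No four classes are coplanar: four pairwise non-proportional edges spanning only a plane would force $\Gamma \setminus E'$ to have three components, and contracting these to three vertices would give four loopless edges on three vertices in which every pair connects two \emph{different} vertices, impossible since there are only three endpoint-types. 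I would also exploit isolating cycles: whenever some cycle meets exactly one class $i$ (for instance when that class is a self-loop or a pair of edges bounding a bigon), plugging it into the form forces $c_i = 0$.

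The hard part is upgrading these to the full statement that the $[\pi_i]$ impose independent conditions on \emph{all} quadrics: for $g \ge 3$, "no four collinear" no longer suffices, and one must rule out, e.g., all weight points lying on a common conic. I would attack this by induction on $g$, with the component-counting formula as the engine: after eliminating the classes handled by isolating cycles, reduce to the remaining simple classes and cut $\Gamma$ along a minimal cut, or delete/contract an edge lying in a cycle, controlling exactly how the cycle space drops. The delicate point — and where I expect the real work — is that deleting edges can merge classes, since only relations are added to $Z$, so the induction must track these mergers and show the surviving diagonal form still cannot vanish. Equivalently, in the Gram/Schur formulation, one must prove that the Hadamard square of the rank-$g$ cycle-space projection matrix $(\Tr(r_i r_j))$, which the Schur product theorem already shows is positive semidefinite, is in fact positive definite on the set of representative edges.
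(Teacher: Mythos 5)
Your reduction of the theorem to a graph-theoretic statement is correct and well set up: the identification of $F$ with $H_1(\Gamma;\ratls)$ under which $r_i\otimes r_i$ becomes $\pi_i\otimes\pi_i$, the equivalence with ``no nonzero diagonal form $\sum_i c_i x_{e_i}^2$ vanishes on the cycle space,'' and the dictionary between linear relations among the $\pi_e$ and the cut space are all sound. So are the ``no two proportional'' and (modulo some glossed-over case analysis about loop and parallel edges in the contracted three-vertex graph) ``no four coplanar'' observations. But these are only the low-order instances, and you say so yourself: the actual content of the theorem --- that the $n$ classes impose independent conditions on \emph{all} quadrics --- is left as a plan (``induction on $g$,'' ``track these mergers,'' ``where I expect the real work''). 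A deletion/contraction induction is genuinely delicate for exactly the reason you identify: removing edges enlarges the space of relations on the cycle space and can merge weight classes, and nothing in the proposal controls how the surviving diagonal form behaves under such mergers. As written, the argument stops precisely at the theorem's main assertion, so there is a genuine gap.

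The paper's proof sidesteps all of this by inducting not on $g$ but on degenerations of the stratum, i.e.\ on the number of vertices of $\Gamma(\mathcal{S})$ at fixed genus. The base case is the one-vertex (irreducible) stratum, where the $g$ weights form a $\ratls$-basis of $F$ and independence of the $r_i\otimes r_i$ is automatic. In the inductive step one splits a vertex in two, adding a single edge $e$; if its weight class is new, one must exhibit an element of $\bS_\ratls(F)$ annihilating $r\otimes r$ for every old weight $r$ but not $r_e\otimes r_e$. This is done by finding two loops that both traverse $e$ and share no other edge --- equivalently, by Menger's theorem, two edge-disjoint paths in $\Gamma\setminus e$ joining the endpoints of $e$, which exist because $e$ lies in no $2$-edge cut (its weight being distinct from all others). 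The element $\lambda(\gamma_1)\otimes\lambda(\gamma_2)$ then separates $N(\mathcal{S})$ from $N(\mathcal{S}')$ by Lemma~\ref{le:NSloops}. If you want to complete your approach, replace the induction on $g$ by this vertex-splitting induction: it converts the ``independent conditions on quadrics'' claim into the production of one explicit separating functional per step, which is exactly what your cut-space dictionary is good at.
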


\begin{proof}
  The equivalence of these statements is clear from the definition of $N(\mathcal{S})$ and
  $A_\mathcal{S}$.  Suppose first that $\mathcal{S}$ is an irreducible stratum (parameterizing
  irreducible stable curves).  There are $g$ distinct weights $r_1,\ldots,r_g$ which form a basis of
  $F$ over $\ratls$, and the $r_i\otimes r_i$ are then linearly independent in $\Sym_\ratls(F)$ (as is true for
  any basis of a vector space).

  We now show that if the claim holds for some weighted stratum $\mathcal{S}$, then it holds for any
  degeneration $\mathcal{S}'$ of $\mathcal{S}$ obtained by pinching a single curve.  The claim then
  follows for all strata by induction.

  If the new node of $\mathcal{S}'$ has the same weight as some node of $\mathcal{S}$, then
  $N(\mathcal{S}) = N(\mathcal{S}')$, and we are done. Now suppose the new node has distinct weight.
  To finish the proof, we must find an element of $N(\mathcal{S})$ which does not belong to
  $N(\mathcal{S}')$.  In the weighted graph $\Gamma(\mathcal{S}')$, let $e$ be the edge
  corresponding to the new node.  Using the interpretation of pairs of loops on $\Gamma(\mathcal{S}')$ as
  elements of $\bS_\ratls(F)$ from \S\ref{sec:boundary}, it suffices to find a pair of loops on
  $\Gamma(\mathcal{S}')$ which both contain the edge $e$ and have no other edges in common.

  Let $G$ be the graph obtained by deleting $e$ from $\Gamma(\mathcal{S}')$, and let $p$ and $q$ be
  the distinct vertices of $G$ which were joined by $e$.  Since the weight of $e$ is distinct from
  the weights of the other edges of $\Gamma(\mathcal{S}')$, there is no edge of
  $\Gamma(\mathcal{S}')$, which jointly with $e$ separates $\Gamma(\mathcal{S}')$.  Thus $G$ is not
  separated by any of its edges.  It then follows from Menger's theorem (see \cite{bondymurty}) that there
  are two edge-disjoint paths on $G$ joining $p$ to $q$.  These paths yield the required pair of
  loops on $\Gamma(\mathcal{S}')$.
\end{proof}

\begin{cor}
  \label{cor:CisnotN}
  For any $F$-weighted boundary stratum $\mathcal{S}$, the cone $C(\mathcal{S})\subset\bS_\ratls(F)$
  strictly contains the subspace $N(\mathcal{S})$.
\end{cor}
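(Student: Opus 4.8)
The plan is to exploit the linear independence established in Theorem~\ref{thm:dimension_N} together with the duality between $\Sym_\ratls(F)$ and $\bS_\ratls(F)$. First I would record the easy inclusion $N(\mathcal{S})\subseteq C(\mathcal{S})$: if $x$ annihilates $W(\mathcal{S})$, then in particular $\langle x, r\otimes r\rangle = 0\geq 0$ for every weight $r$, so $x$ satisfies the defining inequalities of the cone. The substance of the corollary is therefore the \emph{strictness} of this inclusion, for which it suffices to exhibit a single element of $C(\mathcal{S})$ lying outside $N(\mathcal{S})$.

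To build such an element, let $r_1,\dots,r_n$ be the distinct weights up to sign, so that every weight $r$ satisfies $r\otimes r = r_i\otimes r_i$ for some $i$ (using $(-r_i)\otimes(-r_i)=r_i\otimes r_i$); hence both the cone condition and the annihilator condition depend only on the finite set $\{r_i\otimes r_i\}$. By Theorem~\ref{thm:dimension_N} these elements are linearly independent in $\Sym_\ratls(F)$. Because $\bS_\ratls(F)$ is the full dual of $\Sym_\ratls(F)$ under the stated pairing, linear independence lets me prescribe the values of a functional freely on the $r_i\otimes r_i$; I would choose $x\in\bS_\ratls(F)$ with $\langle x, r_i\otimes r_i\rangle = 1$ for all $i$.

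It then follows immediately that $\langle x, r\otimes r\rangle = 1\geq 0$ for every weight $r$, so $x\in C(\mathcal{S})$, while $\langle x, r_1\otimes r_1\rangle = 1\neq 0$ shows $x\notin N(\mathcal{S})$. This produces the required point of $C(\mathcal{S})\setminus N(\mathcal{S})$ and establishes the strict containment.

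I do not anticipate a genuine obstacle: once Theorem~\ref{thm:dimension_N} is in hand, the argument is pure linear algebra. The only point that must not be skipped is that strict positivity can be achieved simultaneously at all the $r_i\otimes r_i$, and this is exactly where the linear independence is essential—without it, the prescribed values could be forced into a linear relation, and one could not guarantee a functional positive on every generator.
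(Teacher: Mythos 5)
Your proposal is correct and follows essentially the same route as the paper: both invoke the linear independence of the $r_i\otimes r_i$ from Theorem~\ref{thm:dimension_N} and the duality between $\Sym_\ratls(F)$ and $\bS_\ratls(F)$ to prescribe a functional that is nonnegative on all the $r_i\otimes r_i$ and strictly positive on at least one, hence lies in $C(\mathcal{S})\setminus N(\mathcal{S})$. The only (immaterial) difference is that the paper sets the value to be positive on $r_1\otimes r_1$ and zero on the rest, whereas you set it to $1$ on all of them.
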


\begin{proof}
  Let $r_i$ be the weights of $\mathcal{S}$.  As the $r_i\otimes r_i$ are linearly independent in
  $\Sym_\ratls(F)$, we may find some $t\in \bS_\ratls(F)$ which pairs positively with $r_1\otimes
  r_1$ and trivially with the other $r_i\otimes r_i$.  This $t$ lies in $C(\mathcal{S})$ but not
  $N(\mathcal{S})$. 
\end{proof}

We now turn to the dimension of the RM-torus $T_\mathcal{S}$.

\begin{lemma}
  \label{lem:dimLambdaone}
  The subspace $\Lambda^1\subset\Sym_\ratls(F)$ has dimension $g$.
\end{lemma}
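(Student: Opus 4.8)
The plan is to compute the dimension of $\Lambda^1$ directly. Recall that $F$ is a totally real number field of degree $g$ over $\ratls$, so $F \otimes_\ratls F$ has dimension $g^2$ as a $\ratls$-vector space, and we seek the subspace of elements $x$ satisfying $\lambda \cdot x = x \cdot \lambda$ for all $\lambda \in F$, where the bimodule structure acts on the left and right tensor factors respectively.

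First I would diagonalize the situation by passing to a splitting field. Since $F$ is separable over $\ratls$, the algebra $F \otimes_\ratls F$ becomes, after tensoring with a suitable extension (or already with $F$ itself via the $g$ embeddings $\sigma_1, \ldots, \sigma_g \colon F \to \reals$), a product of copies of a field. Concretely, $F \otimes_\ratls F \cong \prod_{i=1}^g F$ via the map sending $a \otimes b \mapsto (a\,\sigma_i(b))_i$, or more symmetrically one diagonalizes using the idempotents coming from the $g^2$ pairs of embeddings. Under such a decomposition, the left and right multiplications by $\lambda$ become scalar multiplications by $\sigma_i(\lambda)$ and $\sigma_j(\lambda)$ on the $(i,j)$ component. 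The condition $\lambda \cdot x = x \cdot \lambda$ then reads $\sigma_i(\lambda) x_{ij} = \sigma_j(\lambda) x_{ij}$ for all $\lambda$, which forces $x_{ij} = 0$ whenever $i \neq j$ (since distinct embeddings differ on some $\lambda$), while imposing no condition when $i = j$.

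Consequently the commutant $\Lambda^1$ is exactly the ``diagonal'' part, with one free parameter for each of the $g$ diagonal indices, giving $\dim_\ratls \Lambda^1 = g$. I would then remark that this diagonal subalgebra is visibly symmetric (the diagonalizing idempotents $e_i$ corresponding to $\sigma_i \otimes \sigma_i$ are fixed by the swap $\theta$), which reconciles with the already-cited fact that $\Lambda^1 \subset \Sym_\ratls(F)$ from \cite[Proposition~5.1]{bainmoel}; this serves as a consistency check rather than a separate step. A cleaner way to phrase the whole argument without choosing a basis is to note that $\Lambda^1$ is precisely the image of the structure map $F \to F \otimes_\ratls F$ realizing $F$ as a commutative $F$-algebra, i.e.\ $\Lambda^1 \cong \End_{F \otimes F}(F)$-type data, whose dimension over $\ratls$ equals $\dim_\ratls F = g$ by the theory of separable algebras.

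The main obstacle, such as it is, will be packaging the diagonalization cleanly: the decomposition $F \otimes_\ratls F \cong \prod F$ a priori lives over $F$ rather than over $\ratls$, so I must be careful that the counting of free parameters is done at the level of $\ratls$-dimensions and that Galois descent (or simply the fact that the commutant is defined over $\ratls$ and its dimension is stable under field extension) is invoked correctly. Once the bimodule is split and the commutation condition is seen to kill all off-diagonal components, the dimension count is immediate, so I expect the argument to be short and the only real care needed is in justifying the splitting and the equality of $\ratls$-dimension before and after base change.
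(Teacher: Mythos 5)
Your argument is correct, but it takes a genuinely different route from the paper. The paper's proof is a one-line identification: the trace pairing gives an isomorphism $F\otimes_\ratls F \cong \Hom_\ratls(F,F)$ under which left and right multiplication by $\lambda$ become post- and pre-composition with multiplication by $\lambda$, so the commutant $\Lambda^1$ corresponds to $\Hom_F(F,F)\cong F$, which has $\ratls$-dimension $g$. Your proof instead splits the \'etale algebra $F\otimes_\ratls F$ after base change and observes that the commutation relation kills every off-diagonal component indexed by a pair of distinct embeddings, leaving the $g$-dimensional diagonal. Both are valid; the paper's version has the advantage of feeding directly into Proposition~\ref{prop:epsilonstuff}, since under the same identification the element $\epsilon=\sum r_i\otimes s_i$ is the identity map and $\Lambda^1=F\epsilon$ falls out for free, whereas your version makes the symmetry of $\Lambda^1$ and its behaviour under the swap $\theta$ more transparent. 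Two small points of care in your write-up: the map $a\otimes b\mapsto (a\,\sigma_i(b))_i$ does not literally land in $\prod F$ unless $F/\ratls$ is Galois (for non-Galois $F$ the factors are fields properly containing $F$, with exactly one factor equal to $F$ via the multiplication map), so you should either pass to the Galois closure or to $\overline{\ratls}$ as in your ``more symmetric'' variant and then invoke invariance of dimension under base change, as you yourself flag; and the closing remark that $\Lambda^1$ is ``the image of the structure map $F\to F\otimes_\ratls F$'' is not quite right as stated --- it is the image of the idempotent cutting out the factor on which the two $F$-actions agree (equivalently $F\epsilon$), not of any canonical algebra map $F\to F\otimes_\ratls F$.
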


\begin{proof}
  Under the identification of $F\otimes_\ratls F$ with $\Hom_\ratls(F,F)$ induced by the trace
  pairing, $\Lambda^1$ corresponds to $\Hom_F(F,F)$.
\end{proof}

\begin{prop}
  \label{prop:RMtorusdimension}
  For any admissible $F$-weighted boundary stratum $\mathcal{S}$ of genus $g$, the RM-torus
  $T_\mathcal{S}$ has dimension at most $g-1$.
\end{prop}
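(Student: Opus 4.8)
The plan is to reduce the statement to a purely linear-algebraic claim about the position of $W(\mathcal{S})$ relative to $\Lambda^1$, and then to extract that claim from the admissibility hypothesis via a theorem of the alternative.

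First I would compute $\dim T_\mathcal{S}$ directly from its definition. Since $p$ is a surjective homomorphism of algebraic tori and $T_\mathcal{S}=p^{-1}(0)=\Ker p$, we have $\dim T_\mathcal{S}=\dim A_\mathcal{S}-\dim B$, where $B=\Hom(N(\mathcal{S})\cap\Ann(\Lambda^1)\cap\bS_\zed(\mathcal{I}^\vee),\Gm)$ is the target of $p$. The character lattices of $A_\mathcal{S}$ and $B$ are full-rank lattices in $N(\mathcal{S})$ and in $N(\mathcal{S})\cap\Ann(\Lambda^1)$, so $\dim T_\mathcal{S}=\dim_\ratls N(\mathcal{S})-\dim_\ratls\bigl(N(\mathcal{S})\cap\Ann(\Lambda^1)\bigr)$.

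Next I would rewrite both dimensions using the perfect pairing between $\Sym_\ratls(F)$ and $\bS_\ratls(F)$, each of dimension $g(g+1)/2$. By definition $N(\mathcal{S})=\Ann(W(\mathcal{S}))$, whence $N(\mathcal{S})\cap\Ann(\Lambda^1)=\Ann(W(\mathcal{S})+\Lambda^1)$. Theorem~\ref{thm:dimension_N} gives $\dim W(\mathcal{S})=n$ and Lemma~\ref{lem:dimLambdaone} gives $\dim\Lambda^1=g$, so after cancelling the common $g(g+1)/2$ terms one finds $\dim T_\mathcal{S}=\dim(W(\mathcal{S})+\Lambda^1)-n=g-\dim\bigl(W(\mathcal{S})\cap\Lambda^1\bigr)$. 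Thus the proposition is equivalent to the single assertion that $W(\mathcal{S})\cap\Lambda^1\neq 0$.

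Finally, this is where admissibility enters, and I expect it to be the crux. Working inside the subspace $\Ann(\Lambda^1)$, consider the linear functionals $\ell_i=\langle\,\cdot\,,r_i\otimes r_i\rangle$ for the $n$ distinct weights $r_i$. Admissibility says precisely that the cone $\{\ell_i\geq 0\ \forall i\}$ inside $\Ann(\Lambda^1)$ coincides with its lineality space $\{\ell_i=0\ \forall i\}$, i.e.\ the $\ell_i$ admit no common point at which one of them is strictly positive. By a theorem of the alternative (Stiemke/Gordan) this is equivalent to the existence of strictly positive rationals $\mu_i>0$ with $\sum_i\mu_i\,\ell_i=0$ on $\Ann(\Lambda^1)$; dually this says that $w:=\sum_i\mu_i\,(r_i\otimes r_i)$ annihilates $\Ann(\Lambda^1)$, so $w\in\Ann(\Ann(\Lambda^1))=\Lambda^1$ by double annihilation. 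Since the $r_i\otimes r_i$ are linearly independent by Theorem~\ref{thm:dimension_N} and all $\mu_i>0$, the element $w$ is nonzero, and it lies in $W(\mathcal{S})$ by construction. Hence $W(\mathcal{S})\cap\Lambda^1\neq 0$, giving $\dim T_\mathcal{S}\leq g-1$. The main obstacle is the correct translation of the admissibility inclusion into the precise form of the alternative, and verifying that the resulting strictly positive combination does produce a genuinely nonzero element of $\Lambda^1$ through the double-annihilator identity.
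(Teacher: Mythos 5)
Your proof is correct, but it reaches the conclusion by a route dual to the paper's. You work on the $\Sym_\ratls(F)$ side: after the (shared) first step $\dim T_\mathcal{S}=\dim N(\mathcal{S})-\dim\bigl(N(\mathcal{S})\cap\Ann(\Lambda^1)\bigr)$, you pass to annihilators to get the exact identity $\dim T_\mathcal{S}=g-\dim\bigl(W(\mathcal{S})\cap\Lambda^1\bigr)$, and then use Stiemke's theorem of the alternative to manufacture a nonzero element $\sum_i\mu_i\,(r_i\otimes r_i)$ of $W(\mathcal{S})\cap\Lambda^1$ with all $\mu_i>0$. The paper instead stays on the $\bS_\ratls(F)$ side: it passes to the quotient $\bS_\ratls(F)/N(\mathcal{S})$, observes that admissibility forces the images of $\Ann(\Lambda^1)$ and $C(\mathcal{S})$ to meet only at $0$, and invokes Corollary~\ref{cor:CisnotN} (the image of $C(\mathcal{S})$ is nontrivial) to conclude that the image of $\Ann(\Lambda^1)$ is a proper subspace, which gives the codimension bound. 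The two arguments consume the same two inputs --- admissibility and the linear independence of the $r_i\otimes r_i$ from Theorem~\ref{thm:dimension_N} --- but the paper uses the independence to build a separating functional in $C(\mathcal{S})\setminus N(\mathcal{S})$, whereas you use it only to certify that your positive combination is nonzero. Your version is slightly heavier (it needs the Stiemke/Gordan alternative, which does hold over $\ratls$ since the data are rational and the relevant conditions are open) but buys more: it is constructive and yields the precise formula $\dim T_\mathcal{S}=g-\dim\bigl(W(\mathcal{S})\cap\Lambda^1\bigr)$ rather than just the inequality; the paper's version is shorter and purely a dimension count. Both are complete proofs.
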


\begin{proof}
  From the definition of $T_\mathcal{S}$, we have
  \begin{equation*}
    \dim T_\mathcal{S} = \dim N(\mathcal{S}) - \dim (N(\mathcal{S}) \cap \Ann(\Lambda^1)).
  \end{equation*}
  Under the quotient map $\bS_\ratls(F) \to \bS_\ratls(F) / N(\mathcal{S})$, the images of
  $\Ann(\Lambda^1)$ and $C(\mathcal{S})$ have trivial intersection by the admissibility of
  $\mathcal{S}$.  By Corollary~\ref{cor:CisnotN}, the image of $C(\mathcal{S})$ is nontrivial.  It
  follows that the image of $\Ann(\Lambda^1)$ is not all of $\bS_\ratls(F) / N(\mathcal{S})$.
  Equivalently,
  \begin{equation*}
    \dim \Ann(\Lambda^1) - \dim (\Ann(\Lambda^1) \cap N(\mathcal{S})) < \dim \bS_\ratls(F) - \dim N(\mathcal{S}).
  \end{equation*}
  As $\Ann(\Lambda^1)$ has codimension $g$ in $\bS_\ratls(F)$ by Lemma~\ref{lem:dimLambdaone}, the
  desired inequality follows.
\end{proof}

There are examples where $T_\mathcal{S}$ has dimension less than $g-1$; see \cite[Appendix~A]{bainmoel}.

Choose a basis $r_1, \ldots, r_g$ of $F$, and let $s_1, \ldots, s_g$ be the dual basis with respect
to the trace pairing.  We define
\begin{equation*}
  \epsilon = \sum_{i=1}^g r_i\otimes s_i \in F \otimes_\ratls F.
\end{equation*}

\begin{prop}
  \label{prop:epsilonstuff}
  The element $\epsilon$ lies in $\Lambda^1$ and does not depend on the choice of the basis of $F$.
  Thus we have
  $
    \Lambda^1 = \{ x\epsilon : x\in F\}.
  $
  For every $x\in F$ and $s\otimes t \in \bS_\ratls(F)$, we have the pairing
  \begin{equation}
    \label{eq:1}
    \langle x\epsilon, s \otimes t\rangle = \Tr^F_\ratls(xst)
  \end{equation}
\end{prop}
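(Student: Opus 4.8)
The plan is to leverage the identification of $F\otimes_\ratls F$ with $\Hom_\ratls(F,F)$ used in the proof of Lemma~\ref{lem:dimLambdaone}. Using the trace pairing to identify $F$ with its $\ratls$-dual, a simple tensor $a\otimes b$ corresponds to the endomorphism $\phi_{a\otimes b}\colon y\mapsto \Tr(by)\,a$. The key computation is that $\epsilon$ maps to the identity: since $\{r_i\}$ and $\{s_i\}$ are dual bases, the identity $\sum_i \Tr(s_i y)\,r_i = y$ gives $\phi_\epsilon=\mathrm{id}_F$. Because $\mathrm{id}_F$ is $F$-linear it lies in $\Hom_F(F,F)=\Lambda^1$, so $\epsilon\in\Lambda^1$; and because $\mathrm{id}_F$ is manifestly independent of any basis, so is its preimage $\epsilon$. (The same observation, together with the symmetry of the trace form, shows $\epsilon$ is symmetric, confirming $\epsilon\in\Sym_\ratls(F)$.)

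Next I would identify all of $\Lambda^1$. Under the same correspondence, left multiplication by $x\in F$ on $F\otimes_\ratls F$ becomes post-composition with multiplication by $x$, so $x\epsilon$ corresponds to $y\mapsto x\,\phi_\epsilon(y)=xy$, i.e.\ to multiplication by $x$. Since every element of $\Hom_F(F,F)$ is multiplication by a unique $x\in F$ (namely $x=\phi(1)$), the subspace $\Lambda^1=\Hom_F(F,F)$ equals $\{x\epsilon:x\in F\}$, proving the second assertion; this is consistent with $\dim_\ratls\Lambda^1=g$. As $\epsilon\in\Lambda^1$, left and right multiplication agree, so $x\epsilon$ is unambiguous.

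For the pairing formula \eqref{eq:1} I would compute directly, bypassing the endomorphism picture. Writing $x\epsilon=\sum_i (xr_i)\otimes s_i$ and expanding bilinearly against $s\otimes t$ via $\langle a\otimes b, c\otimes d\rangle=\Tr(ac)\Tr(bd)$ gives
\begin{equation*}
  \langle x\epsilon, s\otimes t\rangle = \sum_{i=1}^g \Tr(x r_i s)\,\Tr(s_i t).
\end{equation*}
Since the scalars $\Tr(s_i t)\in\ratls$ are precisely the coordinates of $t$ in the basis $\{r_i\}$, we have $\sum_i \Tr(s_i t)\,r_i = t$; commutativity of $F$ then collapses the sum to $\Tr\big(x s\sum_i \Tr(s_i t)\,r_i\big)=\Tr(xst)=\Tr^F_\ratls(xst)$. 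Independence of the chosen representative of $s\otimes t$ in $\bS_\ratls(F)$ is immediate from $\Tr(xst)=\Tr(xts)$.

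The only real subtlety, and the step I would be most careful about, is the bookkeeping of conventions: one must fix a single direction for the identification $F\otimes_\ratls F\cong\Hom_\ratls(F,F)$ and then verify that the left and right $F$-bimodule actions defining $\Lambda^1$ correspond respectively to post- and pre-composition with scalar multiplication. Once this dictionary is pinned down, all three assertions reduce to the dual-basis identity $\sum_i\Tr(s_i y)\,r_i=y$, and everything else is routine.
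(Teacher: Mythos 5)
Your proof is correct, and every step checks out: $\phi_\epsilon=\mathrm{id}_F$ follows from the dual-basis identity, the bimodule actions do correspond to post- and pre-composition under the identification $F\otimes_\ratls F\cong\Hom_\ratls(F,F)$ induced by the trace pairing, and the expansion $\langle x\epsilon,s\otimes t\rangle=\sum_i\Tr(xr_is)\Tr(s_it)=\Tr(xst)$ is exactly right. The one structural difference is that the paper does not actually prove this proposition at all --- it simply cites Lemma~6.2 of \cite{bainmoel} and remarks that the argument there (which only treats the pairing against $t\otimes t$) extends verbatim to $s\otimes t$. So your write-up is more self-contained than the paper's; it also dovetails nicely with the identification $F\otimes_\ratls F\cong\Hom_\ratls(F,F)$ that the paper already invokes in the proof of Lemma~\ref{lem:dimLambdaone}, so nothing new is imported. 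Two small points you handled well and should keep: the observation that basis-independence is inherited from that of $\mathrm{id}_F$ (which also gives $\theta(\epsilon)=\epsilon$, consistent with $\Lambda^1\subset\Sym_\ratls(F)$), and the final check that $\Tr(xst)=\Tr(xts)$, which is what makes the pairing well defined on the quotient $\bS_\ratls(F)$.
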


\begin{proof}
  See \cite[Lemma~6.2]{bainmoel}.  This lemma only calculates the pairing $\langle x\epsilon,
  t\otimes t\rangle$, but the proof of the more general statement is identical.
\end{proof}

We define the evaluation map $\ev\colon \bS_\ratls(F)\to F$ by $\ev(s\otimes t) = st$.

\begin{cor}
  \label{cor:AnnisKer}
  The annihilator $\Ann(\Lambda^1)$ is the kernel of $\ev$.
\end{cor}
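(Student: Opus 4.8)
The plan is to reduce the statement to the nondegeneracy of the trace form on $F$, using the explicit description of $\Lambda^1$ and its pairing against $\bS_\ratls(F)$ furnished by Proposition~\ref{prop:epsilonstuff}. Since $\Lambda^1 = \{x\epsilon : x\in F\}$, an element $z\in\bS_\ratls(F)$ lies in $\Ann(\Lambda^1)$ precisely when $\langle x\epsilon, z\rangle = 0$ for every $x\in F$; thus the whole computation is controlled by the single family of pairings $x\mapsto\langle x\epsilon, z\rangle$.

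The key step is to rewrite this family in terms of $\ev$. By the pairing formula \eqref{eq:1} we have $\langle x\epsilon, s\otimes t\rangle = \Tr^F_\ratls(xst)$ for elementary tensors, and since both sides are $\ratls$-linear and factor through $\bS_\ratls(F)$, extending by linearity gives $\langle x\epsilon, z\rangle = \Tr^F_\ratls(x\cdot\ev(z))$ for every $z\in\bS_\ratls(F)$. Hence $z\in\Ann(\Lambda^1)$ if and only if $\Tr^F_\ratls(x\cdot\ev(z)) = 0$ for all $x\in F$.

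Finally I would invoke nondegeneracy of the trace pairing on the totally real (hence separable) field $F$: the condition $\Tr^F_\ratls(xw) = 0$ for all $x\in F$ forces $w = 0$. Applying this with $w = \ev(z)$ shows that $z\in\Ann(\Lambda^1)$ if and only if $\ev(z) = 0$, that is, $\Ann(\Lambda^1) = \Ker(\ev)$. There is no serious obstacle here; the only point requiring care is the passage from the elementary-tensor formula \eqref{eq:1} to arbitrary $z$, which must respect the symmetrization relations defining $\bS_\ratls(F)$. This is automatic, however, since $\ev(s\otimes t) = st = ts = \ev(t\otimes s)$, so $\ev$ is well defined on $\bS_\ratls(F)$ and the formula extends without ambiguity.
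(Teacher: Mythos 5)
Your proof is correct and follows essentially the same route as the paper: both use Proposition~\ref{prop:epsilonstuff} to identify $\Lambda^1$ with $F\epsilon$ and rewrite the pairing as $\langle x\epsilon, z\rangle = \Tr^F_\ratls(x\cdot\ev(z))$, then conclude by nondegeneracy of the trace form. Your write-up is in fact slightly more complete, since you verify both inclusions and the well-definedness of $\ev$ on $\bS_\ratls(F)$, whereas the paper only spells out the inclusion $\Ann(\Lambda^1)\subseteq\Ker(\ev)$.
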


\begin{proof}
  If $\alpha\in \Ann(\Lambda^1)$, then we have by Proposition~\ref{prop:epsilonstuff} that
  \begin{equation*}
    0 = \langle x\epsilon, \alpha \rangle = \Tr(\ev(\alpha)x)
  \end{equation*}
  for all $x\in F$.  Since the trace pairing is nondegenerate, it follows that $\ev(\alpha) = 0$.
\end{proof}

By the definition of $A_\mathcal{S}$, we have the identification $\chi(A_\mathcal{S})\otimes\ratls =
N(\mathcal{S})$, where we write $\chi(T)$ for the character group of any torus $T$.  Given any subtorus
$U\subset A_\mathcal{S}$, we write $\Ann(U)\subset N(\mathcal{S})$ for the subspace of characters
which annihilate $U$.  This is a bijection between dimension $d$ subtori of $A_\mathcal{S}$ and
codimension $d$ subspaces of $N(\mathcal{S})$.  With this notation, $\Ann(T_\mathcal{S}) = \Ann(\Lambda^1)$.

\begin{prop} \label{prop:toruschar2}
  For any subtorus $U\subset A_\mathcal{S}$, the intersection of $U$ with the RM-torus $T_\cS$ has
  codimension in $T_\cS$ equal to $\dim \ev(\Ann(U))$, that is
  \begin{equation}
    \label{eq:4}
    \dim(T_\cS)- \dim (U\cap T_\cS) =  \dim \ev(\Ann(U)).
  \end{equation}
  In particular,
  $$\dim(T_\mathcal{S}) = \dim \ev(N(\mathcal{S})).$$
\end{prop}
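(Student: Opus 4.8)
The plan is to translate everything into the character dictionary set up just before the statement, and then reduce the claim to a single application of rank--nullity for the evaluation map $\ev$. Throughout I write $N = N(\mathcal{S})$ and use the correspondence $U \mapsto \Ann(U) \subset N$ between subtori of $A_\mathcal{S}$ and rational subspaces of $N$, under which $\dim U = \dim N - \dim \Ann(U)$. The starting point is the identity $\Ann(T_\mathcal{S}) = \Ann(\Lambda^1)$ recorded above, together with Corollary~\ref{cor:AnnisKer}: these tell us that, as a subspace of $N$, $\Ann(T_\mathcal{S}) = \ker(\ev) \cap N = \ker(\ev|_N)$.

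Next I would compute the codimension of $U \cap T_\mathcal{S}$ in $T_\mathcal{S}$ directly from the surjection $p\colon A_\mathcal{S} \to B$ of \eqref{eq:defofp}, whose kernel is $T_\mathcal{S} = p^{-1}(0)$ (here $B$ denotes the target torus). Restricting $p$ to $U$ gives a homomorphism of tori $p|_U\colon U \to B$ with kernel $U \cap T_\mathcal{S}$ and image a subtorus $p(U) \subset B$, so that $\dim(U \cap T_\mathcal{S}) = \dim U - \dim p(U)$; combining this with $\dim T_\mathcal{S} = \dim A_\mathcal{S} - \dim B$ yields
\[
\dim T_\mathcal{S} - \dim(U \cap T_\mathcal{S}) = \dim \Ann(U) - \bigl(\dim B - \dim p(U)\bigr).
\]
The remaining task is to identify the parenthesized codimension of $p(U)$ in $B$. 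Dualizing, $p^*$ is the inclusion of $\chi(B)\otimes\ratls = \ker(\ev|_N)$ into $N$, and a character $k \in \ker(\ev|_N)$ vanishes on $p(U)$ exactly when $p^*k$ vanishes on $U$, i.e.\ when $k \in \Ann(U)$. Hence $\Ann(p(U)) = \Ann(U) \cap \ker(\ev|_N)$, so $\dim B - \dim p(U) = \dim\bigl(\Ann(U) \cap \ker(\ev|_N)\bigr)$.

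Substituting, the difference becomes $\dim \Ann(U) - \dim\bigl(\Ann(U)\cap\ker\ev\bigr)$, which is precisely $\dim \ev(\Ann(U))$ by rank--nullity applied to $\ev$ restricted to $\Ann(U)$; this proves \eqref{eq:4}. The final assertion is the special case $U = \{1\}$, where $\Ann(U) = N$ and the left side is $\dim T_\mathcal{S}$; equivalently it is the one-line computation $\dim T_\mathcal{S} = \dim N - \dim\ker(\ev|_N) = \dim \ev(N)$. The only genuinely delicate point is the bookkeeping in the character correspondence, in particular verifying that $p^*$ is the asserted inclusion and hence that $\Ann(p(U)) = \Ann(U)\cap\ker(\ev|_N)$; everything else is routine once the dictionary is in place. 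Alternatively one can bypass $p$ entirely by invoking the standard intersection formula $\dim(U\cap V) = \dim N - \dim(\Ann(U) + \Ann(V))$ for subtori (proved via the exact cocharacter functor), applied with $V = T_\mathcal{S}$ and $\Ann(T_\mathcal{S}) = \ker(\ev|_N)$, which leads to the same rank--nullity reduction.
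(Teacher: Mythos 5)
Your proof is correct and is essentially the paper's argument: both rest on Corollary~\ref{cor:AnnisKer} (so that $\Ann(T_\mathcal{S})=\ker(\ev|_{N(\mathcal{S})})$), the dictionary between subtori and annihilator subspaces, and rank--nullity for $\ev$ restricted to $\Ann(U)$. The paper simply runs the bookkeeping through the identity $\Ann(U\cap T_\mathcal{S})=\Ann(U)+\Ann(T_\mathcal{S})$ rather than through the image and kernel of $p|_U$, which is exactly the equivalent alternative you note at the end.
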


\begin{proof}
  By Corollary~\ref{cor:AnnisKer}, we have
  \begin{align*}
    \dim \ev(\Ann(U)) &= \dim\Ann(U) - \dim(\Ann(U)\cap\Ann(T_\mathcal{S}))\\
    &= \dim(\Ann(U) + \Ann(T_\mathcal{S})) - \dim\Ann(T_\mathcal{S}).
  \end{align*}
  Also note that $\Ann(U\cap T_\mathcal{S}) = \Ann(U) + \Ann(T_\mathcal{S})$.
  It follows that
  \begin{align*}
    \dim\ev(\Ann(U)) 
    &= \dim\Ann(U\cap T_\mathcal{S}) - \dim\Ann(T_\mathcal{S}) \\
    &= \dim(T_\mathcal{S}) - \dim(U\cap T_\mathcal{S}).
  \end{align*}

  To obtain the last statement, apply \eqref{eq:4} for $U$ the trivial torus.
\end{proof}

As an application, we can now calculate the dimension of $T_\mathcal{S}$ for many strata
$\mathcal{S}$.

\begin{prop}
  \label{prop:dimTSwithloop}
  Suppose that $\mathcal{S}$ is an admissible weighted boundary stratum for which the dual graph
  $\Gamma(\mathcal{S})$ contains an edge joining a vertex to itself.  Then $\dim (T_\mathcal{S}) = 
  g-1$.
\end{prop}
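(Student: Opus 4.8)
The plan is to combine Proposition~\ref{prop:RMtorusdimension} with the dimension formula of Proposition~\ref{prop:toruschar2}. The former already supplies the upper bound $\dim T_\mathcal{S}\leq g-1$ (and this is the only place admissibility of $\mathcal{S}$ is needed), so it suffices to establish the reverse inequality $\dim T_\mathcal{S}\geq g-1$. By the final assertion of Proposition~\ref{prop:toruschar2} we have $\dim T_\mathcal{S}=\dim\ev(N(\mathcal{S}))$, so I would reduce the problem to showing that the linear subspace $\ev(N(\mathcal{S}))\subseteq F$ has dimension at least $g-1$.

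To produce enough elements of $\ev(N(\mathcal{S}))$, I would exploit the self-loop via Lemma~\ref{le:NSloops}. Let $e$ be an edge of $\Gamma(\mathcal{S})$ joining a vertex to itself, and let $\gamma_e$ be the corresponding loop; then $a=\lambda(\gamma_e)\in F$ is nonzero, being the image under the isomorphism $\lambda$ of a nonzero homology class. For any loop $\gamma'$ in $\Gamma(\mathcal{S})$ that does not traverse $e$, the loops $\gamma_e$ and $\gamma'$ are edge-disjoint, so Lemma~\ref{le:NSloops} places $\lambda(\gamma_e)\otimes\lambda(\gamma')$ in $N(\mathcal{S})$. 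Applying $\ev$ to this elementary tensor yields the product $a\cdot\lambda(\gamma')\in\ev(N(\mathcal{S}))$.

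The remaining step is a homological count. The loops $\gamma'$ avoiding $e$ are exactly the loops of the graph $G=\Gamma(\mathcal{S})\smallsetminus e$, and their classes span $H_1(G;\ratls)$. Since a self-loop lies on a single cycle and does not disconnect the (connected) dual graph, deleting $e$ drops the first Betti number by exactly one; hence under the isomorphism $\lambda\colon H_1(\Gamma(\mathcal{S});\ratls)\xrightarrow{\sim}F$ the elements $\lambda(\gamma')$ span a subspace $V\subseteq F$ with $\dim V=g-1$. As $\ev(N(\mathcal{S}))$ is a linear subspace containing $a\cdot\lambda(\gamma')$ for every such $\gamma'$, it contains the whole of $a\cdot V$; and since $a\neq 0$ in the field $F$, multiplication by $a$ is invertible, so $\dim(a\cdot V)=g-1$. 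This gives $\dim\ev(N(\mathcal{S}))\geq g-1$, and combined with the upper bound it forces $\dim T_\mathcal{S}=g-1$.

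I do not anticipate a serious obstacle, since admissibility enters only through the already-proved upper bound and the lower bound is a direct application of Lemma~\ref{le:NSloops}. The one point that will require care is the homological bookkeeping: verifying that the loops edge-disjoint from $e$ genuinely span a codimension-one subspace of $H_1(\Gamma(\mathcal{S});\ratls)$, i.e.\ that removing the self-loop lowers $b_1$ by precisely one, together with confirming that $\ev\bigl(\lambda(\gamma_e)\otimes\lambda(\gamma')\bigr)$ is literally the product $a\cdot\lambda(\gamma')$ in $F$.
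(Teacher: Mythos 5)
Your proposal is correct and follows essentially the same route as the paper: reduce to the lower bound $\dim\ev(N(\mathcal{S}))\geq g-1$ via Propositions~\ref{prop:RMtorusdimension} and \ref{prop:toruschar2}, then use Lemma~\ref{le:NSloops} with the self-loop paired against all edge-disjoint loops, noting that deleting the self-loop drops $b_1$ by one and that multiplication by the nonzero element $\lambda(\gamma_e)$ in the field $F$ preserves dimension. The paper's proof is a condensed version of exactly this argument.
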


\begin{proof}
  By Propositions~\ref{prop:RMtorusdimension} and \ref{prop:toruschar2} we need only to show that
  $\dim \ev(N(\mathcal{S})) \geq g-1$.  Let $\gamma_1$ be a loop which joins a vertex of
  $\Gamma(\mathcal{S})$ to itself, and let $K \subset F$ be the span of $\lambda(\gamma)$ over all
  loops $\gamma$ which are edge-disjoint from $\gamma_1$.  Since $\lambda$ induces an isomorphism
  $H_1(\Gamma(\mathcal{S}); \ratls)\to F$, and deleting a loop from a graph reduces the rank of its
  homology by one, the dimension of $K$ is $g-1$.  By Lemma~\ref{le:NSloops},
  $\lambda(\gamma_1)\otimes K\in N(\mathcal{S})$, so $\lambda(\gamma_1)K\subset \ev(N(\mathcal{S}))$.
\end{proof}

%%%%%%%%%%%%%%%%%%%%%%%%%%%%%%%%%%%%%%%%%%%%%%%%%%%%%%%%%%%%%%%%%%%%%%%%%%%%%%%%%%%%
\section{Reduction to the boundary condition}
\label{sec:redtobd}
%%%%%%%%%%%%%%%%%%%%%%%%%%%%%%%%%%%%%%%%%%%%%%%%%%%%%%%%%%%%%%%%%%%%%%%%%%%%%%%%%%%%

In this section we show how the main theorem reduces to a containment statement
about tori in the locus of stable forms for some boundary strata of $\barmoduli[4]$.
For this purpose we call a boundary stratum $\cS$ of 
$\moduli[4]$  {\em relevant}, if it parametrizes curves of geometric genus zero
and if $\dim(\cS) \geq 3$. Since for geometric genus zero curves  the 
dimension of $\cS$ equals six minus the number of irreducible components 
of any stable curve of the stratum, the last condition is
equivalent to having at most three irreducible components.
\par
\begin{proof}[Proof of Theorem~\ref{thm:Intro}]
  Suppose that, contrary to the claim of the theorem, for some order $\Ord$ and some symplectic
  $\Ord$-module $M$ the component $\RA[{\Ord,M}]$ of the real multiplication locus $\RA[\Ord]$ is
  generically contained in $\moduli[4]$. We denote by $\RM[{\Ord,M}]$ the preimage of this component
  under the Torelli map.  Since the Hilbert modular variety is not compact, the intersection
  $\partial\RM[{\Ord,M}]$ of $\overline{\RM[{\Ord,M}]}$ and the boundary part of the boundary of
  $\barmoduli$ consisting of curves with non-compact Jacobian inside $\barmoduli$ is non-empty.  In
  fact $\partial\RM[{\Ord,M}]$ must be a divisor on $\overline{\RM[{\Ord,M}]}$, hence all
  irreducible components of $\partial\RM[{\Ord,M}]$ are of dimension three.  By \cite[Corollary
  5.6]{bainmoel}, $\partial\RM[{\Ord,M}]$ lies in the union of boundary strata parameterizing curves
  of geometric genus zero.  More precisely, by Theorem~\ref{thm:RMboundary} the boundary of
  $\RM[{\Ord,M}]$ lies in the image of the $\RS[E]$ for some extension classes $E$.
\par
All together, each irreducible component of $\partial \RM[{\Ord,M}]$
generically lies in some relevant admissible weighted boundary stratum and 
for each the relevant admissible weighted boundary stratum $\cS$ that 
$\partial \RM[{\Ord,M}]$ intersects, there is some extension class $E$
such that $\RS[E]$ is of dimension three.
\par
The following Propositions~\ref{prop:cortorelli}, 
\ref{prop:bdcondition} and \ref{prop:redtononsep}
provide the contradiction we need to prove Theorem~\ref{thm:Intro} for
all the topological types of $\cS$.
\end{proof}
\par
\begin{prop} \label{prop:cortorelli}
For each relevant weighted boundary stratum $\cS$ of $\barmoduli[4]$  
without separating nodes, the topological type of $\cS$ being
listed in Figure~\ref{fig:graphs}, the cross-ratio map $\CR$  is
finite. In particular for each extension class $E$,
$$\dim \RS[E] = \dim (\CR(\cS) \cap T_{\cS,E}),$$
where the intersection is taken inside the ambient torus $A_\cS$.
\end{prop}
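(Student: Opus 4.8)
The finiteness of $\CR$ is the whole content; the displayed dimension identity follows formally. A finite morphism has zero-dimensional fibres, so for any locally closed $Y\subset A_\cS$ one has $\dim \CR^{-1}(Y)=\dim\bigl(\CR(\cS)\cap Y\bigr)$. Applying this to $Y=T_{\cS,E}$ and recalling that $\RS[E]=\CR^{-1}(T_{\cS,E})$ yields the asserted equality. So the plan is to prove that $\CR\colon\cS\to A_\cS$ is finite, and I would use the criterion \emph{finite $=$ proper $+$ quasi-finite}, exploiting that the target $A_\cS$ is an affine torus.

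For quasi-finiteness, recall that up to a finite quotient $\cS\cong\prod_i\moduli[0,n_i]$, and that on each factor the cross-ratios of the marked points form a coordinate system. By Lemma~\ref{le:NSloops}, for each pair of edge-disjoint simple loops $\gamma_1,\gamma_2$ in $\Gamma(\cS)$ the coordinate $\Psi(\lambda(\gamma_1)\otimes\lambda(\gamma_2))$ of $A_\cS$ is the explicit product $\prod_{v\in\gamma_1\cap\gamma_2}[\gamma_1^{\out}(v),\gamma_1^{\ii}(v),\gamma_2^{\out}(v),\gamma_2^{\ii}(v)]$. The first step is to verify, for each of the finitely many topological types in Figure~\ref{fig:graphs}, that one can select enough loop pairs so that the resulting cross-ratios recover the configuration on every $\proj^1$-component up to finite ambiguity. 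This is the statement that cross-ratios are coordinates on the interior of $\cS$, and it makes $\CR$ quasi-finite; I expect it to be a direct graph-by-graph check.

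Properness is the main obstacle, since $\cS$ is not compact and I must prevent sequences escaping to $\partial\cS$ while $\CR$ remains inside the torus. I would pass to the compactification $\overline{\cS}=\prod_i\barmoduli[0,n_i]$ and a toric compactification $\overline{A_\cS}$, extend $\CR$ to a morphism $\overline{\CR}\colon\overline{\cS}\to\overline{A_\cS}$, and reduce properness to the assertion $\overline{\CR}^{-1}(A_\cS)=\cS$, so that $\CR$ becomes the base change of the proper map $\overline{\CR}$ along the open immersion $A_\cS\hookrightarrow\overline{A_\cS}$. Concretely this requires: for every degeneration $\cS'$ of $\cS$ obtained by gluing an edge $e$ into $\Gamma(\cS)$ whose new node carries a \emph{distinct} weight, some coordinate $\Psi(x)$ with $x\in N(\cS)$ must acquire a zero or a pole along $\cS'$. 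The candidate is produced exactly as in the proof of Theorem~\ref{thm:dimension_N}: deleting $e$ leaves a graph separated by no single edge, so Menger's theorem furnishes two edge-disjoint loops both running through $e$, and the associated cross-ratio product from Lemma~\ref{le:NSloops} is what degenerates. The genuinely delicate point is \emph{non-cancellation} — confirming that the zeros and poles of the individual cross-ratios in this product do not cancel as one approaches $\cS'$; I would establish a local normal form near each boundary point in which a suitable subset of these cross-ratios are local coordinates, the vanishing ones among them, and then verify non-cancellation graph by graph against Figure~\ref{fig:graphs}.

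The hypothesis that $\cS$ has \emph{no separating node} should be indispensable precisely here. A separating node produces a degeneration that the first homology of $\Gamma(\cS)$ — and hence the loop functionals $\lambda(\gamma)$ feeding into $\Psi$ — cannot detect, so no coordinate would escape the torus and properness would fail; this is exactly why the separating case must be, and is, deferred to the last section. I therefore expect the final proof to be a reduction of finiteness to two combinatorial verifications, interior quasi-finiteness and boundary escape, each performed over the finite list of graphs, with the boundary-escape and non-cancellation analysis carrying essentially all of the difficulty.
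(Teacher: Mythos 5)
Your second paragraph is essentially the paper's argument: Proposition~\ref{prop:cortorelli} is deduced as a weaker version of Theorem~\ref{thm:weaktorelli}, which is proved exactly by the graph-by-graph normalization you describe (fix three marked points, read off the remaining ones from explicit cross-ratios up to finitely many choices, the residual ambiguity being the involution $\phi_\iota$). That gives finite fibres, and, as you correctly observe at the outset, finite fibres alone already yield $\dim \CR^{-1}(T_{\cS,E})=\dim(\CR(\cS)\cap T_{\cS,E})$, which is the only consequence of the proposition the paper ever uses. So the half of your plan that is needed is correct and coincides with the paper's route.

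The properness half, however, is a genuine error: $\CR$ is \emph{not} a proper (hence not a scheme-theoretically finite) morphism for these strata, and your own criterion shows where the argument breaks. Take the irreducible stratum $(1,1)$ and let $P_1\to Q_1$ with the other six points fixed. Every coordinate $R_{1j}=[P_1,Q_1,P_j,Q_j]$ tends to $1$ and the remaining $R_{ij}$ are untouched, so the image converges to an interior point of $A_\cS$ while the sequence leaves every compact subset of $\moduli[0,8]$; no character in $N(\cS)$ acquires a zero or pole. This degeneration bubbles off a component carrying $P_1,Q_1$ and creates a separating node of weight $0$ (so $N(\cS')=N(\cS)$), and your Menger-type escape argument is silent about it because you restricted attention to degenerations whose new node has a \emph{distinct nonzero} weight --- for a separating edge $e$ the graph $\Gamma(\cS')\setminus e$ is disconnected and there are no two edge-disjoint paths through $e$ at all. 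The same failure occurs for degenerations whose new node repeats an existing weight; the paper in fact \emph{relies} on this non-properness in the shared vertex argument of Section~\ref{sec:check}, where $\CR$ is deliberately extended over such boundary strata to a morphism $\tilde{\cS}\to A_\cS$. The hypothesis ``no separating node'' constrains $\cS$ itself, not its degenerations, so it does not rescue properness as you hoped. The fix is simply to drop the properness step: read ``finite'' as ``finite onto its image / finite fibres,'' which is what Theorem~\ref{thm:weaktorelli} delivers and all that the displayed dimension identity requires.
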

\par
\begin{prop} \label{prop:bdcondition}
For each relevant weighted boundary stratum $\cS$ of $\barmoduli[4]$  
without separating nodes, the topological type of $\cS$ being
listed in Figure~\ref{fig:graphs}, 
the intersection of $\CR(\cS)$ with each translated cross-ratio
torus $T_{\mathcal{S}, E}$ inside the ambient torus is of dimension at most two.
\end{prop}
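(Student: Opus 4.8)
The plan is to convert the intersection bound into a bound on the fibre dimension of a single morphism of tori, and then to verify that bound graph by graph for the finitely many dual graphs of Figure~\ref{fig:graphs}. Since $\CR$ is finite by Proposition~\ref{prop:cortorelli} and $T_{\mathcal{S},E}=p^{-1}(q(E))$ by the definition of the translated RM-torus, and since $\RS[E]=\CR^{-1}(T_{\mathcal{S},E})=(p\circ\CR)^{-1}(q(E))$, Proposition~\ref{prop:cortorelli} already gives
\[
  \dim\bigl(\CR(\mathcal{S})\cap T_{\mathcal{S},E}\bigr)=\dim\RS[E]=\dim(p\circ\CR)^{-1}(q(E)),
\]
where $p\circ\CR\colon\mathcal{S}\to Q$ with $Q=\Hom(N(\mathcal{S})\cap\Ann(\Lambda^1)\cap\bS_\zed(\mathcal{I}^\vee),\Gm)$. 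Thus it suffices to bound the fibres of $p\circ\CR$, uniformly in $E$; in particular the fact that $q(E)$ is a torsion point plays no role, and I will simply bound every non-empty fibre by $2$. By Corollary~\ref{cor:AnnisKer} the rational character group of $Q$ is $N(\mathcal{S})\cap\ker(\ev)$, so $p\circ\CR$ is recorded by the functions $\Psi(\chi)$ with $\chi\in N(\mathcal{S})\cap\ker(\ev)$.

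Next I would split into cases on $\dim T_\mathcal{S}$, which equals $\dim\ev(N(\mathcal{S}))$ by Proposition~\ref{prop:toruschar2} and is at most $g-1=3$ by Proposition~\ref{prop:RMtorusdimension}. If $\dim T_\mathcal{S}\le 2$ the claim is immediate, since the intersection lies in the coset $T_{\mathcal{S},E}$ of $T_\mathcal{S}$. The real content is the case $\dim T_\mathcal{S}=3$, which by Proposition~\ref{prop:dimTSwithloop} contains every graph carrying a self-loop. Writing $\mathcal{S}=\prod_i\moduli[0,n_i]$ up to a finite quotient, of dimension $6-m$ where $m\le 3$ is the number of components, I must exhibit $4-m$ of the functions $\Psi(\chi)$, with $\chi\in N(\mathcal{S})\cap\ker(\ev)$, whose common level sets are everywhere at most $2$-dimensional. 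These $\chi$ I would build from $\ratls$-linear combinations of tensors $\lambda(\gamma_1)\otimes\lambda(\gamma_2)$ attached to edge-disjoint simple loops via Lemma~\ref{le:NSloops}, so that each resulting $\Psi(\chi)$ is an explicit Laurent monomial in cross-ratios of marked points whose effect on the cross-ratio coordinates of the factors $\moduli[0,n_i]$ can be read off directly.

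The obstacle I expect to dominate the proof is the construction of these combinations and the verification that they cut the dimension enough. Because $F$ is a field, a single tensor $\lambda(\gamma_1)\otimes\lambda(\gamma_2)$ lies in $\ker(\ev)$ only when $\lambda(\gamma_1)\lambda(\gamma_2)=0$, i.e.\ when one loop is null-homologous and the tensor vanishes; hence one is forced to combine several loop pairs into a sum $\sum_k\lambda(\gamma_1^k)\otimes\lambda(\gamma_2^k)$ with $\sum_k\lambda(\gamma_1^k)\lambda(\gamma_2^k)=0$ in $F$, after which $\Psi$ becomes a ratio of products of cross-ratios. The graph-theoretic criterion of Section~\ref{sec:check} is precisely the statement that, for each graph of Figure~\ref{fig:graphs} with $\dim T_\mathcal{S}=3$, the homology of $\Gamma(\mathcal{S})$ supplies enough such edge-disjoint loop pairs for the associated functions to determine $4-m$ of the cross-ratio coordinates up to finite ambiguity, leaving at most two free coordinates on each fibre. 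The main difficulty is that this must hold uniformly over all admissible lattices $\mathcal{I}$ and all extension classes $E$, so the independence of the resulting cross-ratio relations has to be established combinatorially from $\Gamma(\mathcal{S})$ alone rather than from any one arithmetic weighting.
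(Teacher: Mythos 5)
Your reduction to bounding the fibres of $p\circ\CR\colon\mathcal{S}\to Q$ is correct as far as it goes, but it is essentially a restatement of the proposition rather than progress on it: since $\ker(p)=T_\mathcal{S}$, saying that every fibre of $p\circ\CR$ has dimension at most two is exactly saying that $\CR(\mathcal{S})$ meets every coset of $T_\mathcal{S}$ in dimension at most two. All the content is then deferred to exhibiting $4-m$ functions $\Psi(\chi)$ with $\chi\in N(\mathcal{S})\cap\ker(\ev)$ whose common level sets are everywhere at most two-dimensional, and here the proposal breaks down. As you yourself observe, no nonzero elementary tensor $\lambda(\gamma_1)\otimes\lambda(\gamma_2)$ lies in $\ker(\ev)$, so the $\chi$ must be sums $\sum_k c_k\,\lambda(\gamma_1^k)\otimes\lambda(\gamma_2^k)$ with $\sum_k c_k\,\lambda(\gamma_1^k)\lambda(\gamma_2^k)=0$ in $F$; but which such relations hold is an arithmetic condition on the weighting (on products in $F$ of the $\lambda$-images), not a combinatorial condition on $\Gamma(\mathcal{S})$. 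The exponents $c_k$, hence the functions $\Psi(\chi)$ and their level sets, vary with the lattice $\mathcal{I}$, so there is no graph-theoretic way to verify the level-set bound ``uniformly over all admissible weightings'' as you propose; your characterization of Section~\ref{sec:check} as supplying exactly this is not what that section does.

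The paper's argument is dual to yours and avoids the arithmetic entirely. Arguing by contradiction from $T_{\mathcal{S},E}\subset\overline{\CR(\mathcal{S})}$, it produces a character $a=\lambda(\gamma_1)\otimes\lambda(\gamma_2)$ with $\ev(a)\neq 0$ --- automatic for any two homologically nontrivial loops, precisely because $F$ is a field --- such that $\Psi(a)\equiv 1$ on $\CR(\mathcal{S})$; the latter is a purely combinatorial condition (vertex-disjointness of the loops, via Lemma~\ref{le:NSloops}). Then $\CR(\mathcal{S})$ lies in the subtorus $U$ with $\Ann(U)=\langle a\rangle$, and Proposition~\ref{prop:toruschar2} gives $\dim T_\mathcal{S}-\dim(U\cap T_\mathcal{S})=\dim\ev(\Ann(U))=1$, contradicting the containment. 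For graphs whose two edge-disjoint loops share a vertex, the paper degenerates to a stratum where they become vertex-disjoint and brings in a third loop; and two strata, $(2,2)$ and $(4,2)$, admit no usable loop configuration and need ad hoc arguments (a limit of Gerritzen's equation, and an explicit non-monomial cross-ratio identity, respectively) --- cases your uniform template does not anticipate. To repair your approach you would need either to make the level-set bound for $p\circ\CR$ independent of the weighting, which is the whole difficulty, or to switch to characters outside $\ker(\ev)$ that are trivial on $\CR(\mathcal{S})$, which is the paper's route.
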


It remains to show that a component of $\RA$ contained in $\moduli[4]$ cannot only meet boundary
strata which have separating curves.

\begin{prop} \label{prop:redtononsep}
Suppose that a component $\RA[{\Ord,M}]$ of the real
multiplication locus is generically contained in the
Torelli image of $\barmoduli[4]$. Suppose moreover, 
that the closure of $\RM[{\Ord,M}]$ intersects the image in $\barmoduli[4]$ 
of a weighted relevant boundary 
stratum $\cS$ parameterizing stable curves with a separating node.
Then there exists also an irreducible component of $\partial\RM[{\Ord,M}]$
contained in a relevant boundary stratum  parameterizing stable curves without 
a separating node.
\end{prop}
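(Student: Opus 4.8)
The plan is to translate the separating node into combinatorics of the weighted dual graph, to observe that such a node is \emph{invisible} to all of the real-multiplication data, and then to re-route the corresponding connection into a cycle so as to manufacture a non-separating relevant stratum that still carries a three-dimensional piece of $\partial\RM[\Ord,M]$.

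First I would record the structural facts. A separating node corresponds to a bridge $e_0$ in $\Gamma(\cS)$, i.e.\ an edge whose removal disconnects the graph into two subgraphs $\Gamma_1,\Gamma_2$. Summing the relation ``the weights on each component add to zero'' over all components on the $\Gamma_1$-side, every internal node cancels against its partner and only $e_0$ survives; hence $e_0$ carries weight $0$. Consequently no loop of $\Gamma(\cS)$ crosses $e_0$, so $H_1(\Gamma;\ratls)=H_1(\Gamma_1;\ratls)\oplus H_1(\Gamma_2;\ratls)$, and under the isomorphism $\lambda$ this gives a decomposition $F=V_1\oplus V_2$ with $\dim_\ratls V_i=g_i$ and $g_1+g_2=4$ (so the split is $\{g_1,g_2\}\in\{\{1,3\},\{2,2\}\}$, possibly spread over two or three components). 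Because $e_0$ has weight $0$, it contributes nothing to $W(\cS)$, to $N(\cS)$, to the cone $C(\cS)$, or to the admissibility condition $C(\cS)\cap\Ann(\Lambda^1)\subset N(\cS)$: every one of these objects depends only on the nonzero weights carried by the loops of $\Gamma(\cS)$.

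Next I would exploit this invisibility. Contracting every bridge of $\Gamma(\cS)$ produces a weighted graph $\bar\Gamma$ with the same first homology, the same weights, and hence the same torus $\Hom_\zed(\mathcal{I}^\vee,\Gm)$ together with its $\Ord$-module and polarization structure; thus $\bar\Gamma$ represents a stratum lying over the \emph{same cusp} of the Hilbert modular variety $\RA[\Ord,M]$, it has geometric genus zero, first Betti number $4$, fewer components than $\cS$ (hence at most three), and it is non-separating and relevant. The subtlety — and the reason one cannot simply invoke $\bar\Gamma$ — is that passing to a different stable model over a fixed cusp changes the cross-ratio map $\CR$, so the intersection $\CR(\cS)\cap T_{\cS,E}$ computing $\dim\RS[E]$ may drop dimension on $\bar\Gamma$. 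To preserve the dimension I would instead re-route the weight-zero connection into a cycle: since a simple loop on one side can be pushed across the (weight-zero) bridge and back, I would build a non-separating weighted graph $\cS'$, over the same cusp and with the same nonzero weight data, on which the loops realizing $F=V_1\oplus V_2$ are the same elements of $F$ but are now genuinely linked across the two former sides. Admissibility of $\cS'$ is then inherited from that of $\cS$, since admissibility sees only the nonzero weights.

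The heart of the argument, and the step I expect to be hardest, is to show that this non-separating model $\cS'$ genuinely carries a three-dimensional component of $\partial\RM[\Ord,M]$. Concretely I would transport the family $\RS[E]\subset\cS$ to $\cS'$ by matching cross-ratio coordinates: by Lemma~\ref{le:NSloops} the functions $\Psi$ cutting out $T_{\cS,E}$ are products of cross-ratios attached to pairs of loops, and because the separating node has weight zero these are unchanged under the re-routing, so the defining relations of $T_{\cS,E}$ and of $\CR(\cS)$ persist on $\cS'$. I would then check that $E$ induces a compatible extension class on $\cS'$, that $\dim(\CR(\cS')\cap T_{\cS',E})=3$, and finally, via Theorem~\ref{thm:RMboundary} and the inherited admissibility, that the resulting curves are actual limits of $\RM[\Ord,M]$. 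The genuine obstacle is precisely this transport: one must verify that re-routing across a weight-zero bridge preserves not merely the abstract torus but the \emph{intersection dimension} $\dim(\CR\cap T_{\cdot,E})$ — that no cross-ratio relation is lost or created — and that the image in $\barmoduli[4]$ stays inside $\overline{\RM[\Ord,M]}$. I expect this to require a case-by-case inspection of the finitely many relevant separating dual graphs in genus four, organized by the genus split $\{g_1,g_2\}$ and the number of components, to pin down in each case an explicit non-separating $\cS'$ on which the three-dimensional family survives.
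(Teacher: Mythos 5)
Your structural observations are sound --- the separating edge is indeed a weight-zero bridge, it induces a splitting $F=F_1\oplus F_2$ from the two sides of the graph, and the paper likewise begins by contracting the separating (and any further non-nice) edges to reach a nice non-separating stratum $\cS$ of which the separating stratum $\cS'$ is a degeneration. But from that point your argument diverges into a ``re-routing and transport'' construction that has two genuine gaps. First, the logical endpoint: even if you could move the three-dimensional family to a non-separating weighted graph while preserving all cross-ratio relations, you would only have produced a subvariety of $\CR(\cS)\cap T_{\cS,E}$ satisfying the \emph{necessary} condition of Theorem~\ref{thm:RMboundary}. That condition cannot be ``invoked'' to conclude the curves are actual limits of $\RM[{\Ord,M}]$ --- it points the wrong way. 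The paper closes this gap with the hypothetical sufficiency theorem (Theorem~\ref{thm:suff}), which uses the standing assumption that $\RA[{\Ord,M}]$ lies in the Schottky locus and applies precisely to nice strata; this is the reason the contraction is pushed all the way to a nice graph. Your proposal contains no substitute for this step.

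Second, and more importantly, the mechanism by which new non-separating boundary points are produced is different and your version does not work as described. Re-routing a loop across a weight-zero bridge changes the topological type of the stable curve, so the ``transported'' points are different points of $\barmoduli[4]$ with no a priori relation to $\overline{\RM[{\Ord,M}]}$; and transporting the \emph{existing} family can at best relabel it, not exhibit a component in a non-separating stratum. The paper instead argues by exclusion inside the partial compactification $\overline{\cS}$: the given boundary divisor forces $T_{\cS,E}\cap\CR(\overline{\cS})$ to be nonempty, hence of dimension at least two, while $\CR(\cS')$ lies in the subtorus $U$ cut out by $F_1\otimes F_2$ (vertex-disjoint loops on the two sides give $\Psi\equiv 1$ by Lemma~\ref{le:NSloops}), and Proposition~\ref{prop:toruschar2} with $\dim\ev(F_1\otimes F_2)\geq 2$ bounds $\dim(T_{\cS,E}\cap\CR(\cS'))\leq 1$. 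Hence the intersection must escape into the open non-separating part $\cS$, where Theorem~\ref{thm:suff} applies. You identify the splitting $F_1\oplus F_2$ but never use it for this dimension bound, which is the heart of the proof; the case analysis you anticipate (by genus split and number of components) is indeed present in the paper, but it is organized around exactly this evaluation-map estimate rather than around an explicit re-routed graph.
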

\par
Proposition~\ref{prop:cortorelli} is a weaker version of 
Theorem~\ref{thm:weaktorelli}.
The proof of the other two propositions will occupy the rest of the paper.

\paragraph{Relevant boundary strata of $\moduli[4]$.}

Figure~\ref{fig:graphs} contains the complete list of relevant boundary 
strata of $\moduli[4]$ parameterizing stable curves without 
a separating node. We will refer to the stratum $(x,y)$ as the
stratum in row $x$ and column $y$. The strata $(2,2)$ (``the
$[5] \times^5 [5]$-stratum'') and $(4,2)$ (``the doubled triangle'')
will need a special treatment below.
\par
\begin{figure}[htbp]
  \centering
  \includegraphics{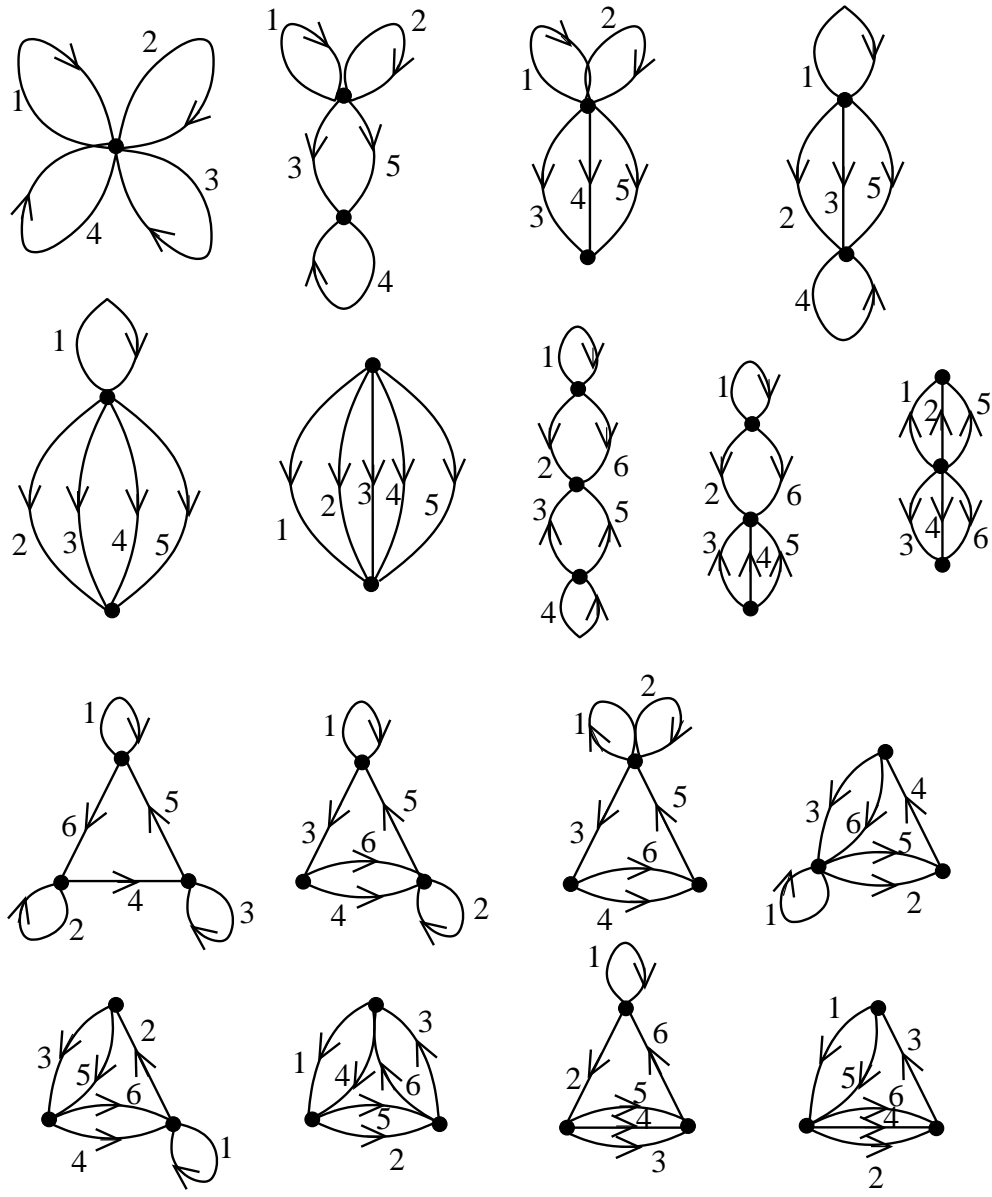}
  \caption{Relevant genus four stable curves without separating nodes}
  \label{fig:graphs}
\end{figure}
\par
The arrows are chosen arbitrarily. Their purpose is to label
the marked points on the normalization of the stable curve that
are glued together. Our convention is that on the edge with label
$k$ the points $P_k$ and $Q_k$ are glued together, where 
the point $P_k$ sits on the outgoing component and $Q_k$ sits
on the incoming component.  If the graph is given an $F$-weighting, we call $r_k$ the weight of the
$k$th edge.
\par
There are many choices for labelling of the edges. 
Our choices have the property that the first four weights always span $F$:
\par
\begin{lemma} \label{le:resbasis}
  Consider an $F$-weighted graph $\Gamma$ containing edges $e_1, \ldots, e_4$ having weights $w_1, \ldots,
  w_4$.  Then the weights $w_i$ span $F$ if and only if the complement of the $e_i$ in $\Gamma$ is a tree.
  \par
  In particular, with the choice of labelling given in Figure~\ref{fig:graphs}, every relevant $F$-weighted
  boundary stratum with no separating nodes has the property that the weights $\{r_1, r_2, r_3,
  r_4\}$ are a $\ratls$-basis of $F$.
\end{lemma}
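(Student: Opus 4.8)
The plan is to reinterpret the spanning condition on the weights as a statement in $H_1(\Gamma;\ratls)$ using the isomorphism $\lambda$, and then recognize the resulting condition as the classical duality between the chords of a spanning tree and a basis of the cycle space. The only real content is setting up the dictionary between edge weights and homology; once that is in place the equivalence is standard.

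First I would build that dictionary. For each edge $e$ of $\Gamma$, let $e^{*}\in H_1(\Gamma;\ratls)^{*}$ be the functional sending a cycle $\gamma$ to the signed number of times $\gamma$ traverses $e$, i.e.\ the coefficient of $e$ in $\gamma$. The interpretation of $\lambda$ recalled in Section~\ref{sec:boundary} states that $\langle\lambda(\gamma),r_e\rangle$ is exactly this algebraic intersection number, so $\langle\lambda(\gamma),r_e\rangle = e^{*}(\gamma)$ for every cycle $\gamma$. Composing the trace-pairing isomorphism $F\cong F^{*}$ with the dual isomorphism $\lambda^{*}\colon F^{*}\to H_1(\Gamma;\ratls)^{*}$ therefore gives an isomorphism $F\to H_1(\Gamma;\ratls)^{*}$ carrying each weight $w_i = r_{e_i}$ to $e_i^{*}$. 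Since $\lambda$ is an isomorphism onto $\mathcal{I}^{\vee}$ we also have $\dim_\ratls H_1(\Gamma;\ratls) = g = 4$. Hence the four weights $w_1,\dots,w_4$ span $F$ if and only if the functionals $e_1^{*},\dots,e_4^{*}$ span $H_1(\Gamma;\ratls)^{*}$; as there are $g=4$ of them in a $g$-dimensional space, this is the same as their forming a basis, and the same as their common kernel $\bigcap_i\ker e_i^{*}$ being zero.

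Next I would carry out the graph-theoretic half. Writing $G = \Gamma\setminus\{e_1,\dots,e_4\}$, a cycle $\gamma$ lies in $\bigcap_i\ker e_i^{*}$ precisely when it has vanishing coefficient on each $e_i$, i.e.\ when it is supported on $G$; thus $\bigcap_i\ker e_i^{*} = H_1(G;\ratls)$. The weights therefore span $F$ if and only if $H_1(G;\ratls)=0$, i.e.\ if and only if $G$ is acyclic. Finally I would count edges: since $\Gamma$ is connected (it is the dual graph of a connected stable curve) with first Betti number $g=4$, it has $\lvert E\rvert = \lvert V\rvert+3$ edges, so $G$ has exactly $\lvert V\rvert-1$ edges. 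A graph on $\lvert V\rvert$ vertices with $\lvert V\rvert-1$ edges is acyclic if and only if it is connected, i.e.\ a spanning tree. Combining the two halves yields the stated equivalence.

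For the concluding assertion I would simply inspect Figure~\ref{fig:graphs}: for each listed relevant stratum without a separating node, deleting the four labelled edges $e_1,e_2,e_3,e_4$ leaves a tree—this is precisely the property the labelling was chosen to have—so the first part gives that $\{r_1,r_2,r_3,r_4\}$ is a $\ratls$-basis of $F$. The main (and only minor) obstacle is careful bookkeeping of orientations and signs in the identification $r_e\leftrightarrow e^{*}$, after which the argument is a routine instance of the spanning-tree/chord duality; verifying that each complement in Figure~\ref{fig:graphs} is a tree is then a finite case check.
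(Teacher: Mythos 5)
Your proof is correct and follows essentially the same route as the paper's: both translate the spanning condition through the isomorphism $\lambda\colon H_1(\Gamma;\ratls)\to F$ and the trace pairing, identifying the failure of the $w_i$ to span with the existence of a loop avoiding $e_1,\dots,e_4$. Your edge-counting step upgrading ``acyclic'' to ``tree'' is a welcome extra detail that the paper's terser proof leaves implicit.
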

\par
\begin{proof}
  Recall that there is the isomorphism $\lambda\colon H_1(\Gamma; \ratls)\to F$.  Let $A\subset F$
  be the annihilator of the span of the $w_i$ with respect to the trace pairing.  If the complement
  of the $e_i$ contains a loop $\gamma$, then by the definition of $\lambda$, the nonzero element
  $\lambda(\gamma)$ pairs trivially with each $w_i$.  Thus $A$ is nontrivial, and the $w_i$ do not
  span.  The converse follows similarly.
\end{proof}
\par
Consequently, we may set $(s_1,s_2,s_3,s_4)$ to be the basis of $F$ dual to
the basis
$(r_1,r_2,r_3,r_4)$, and we keep this notation throughout the rest of the paper.

%%%%%%%%%%%%%%%%%%%%%%%%%%%%%%%%%%%%%%%%%%%%%%%%%%%%%%%%%%%%%%%%%%%%%%%%%%%%%%%%%%%%
\section{Cross-ratios for nice boundary strata without separating nodes}
\label{sec:CRnice}
%%%%%%%%%%%%%%%%%%%%%%%%%%%%%%%%%%%%%%%%%%%%%%%%%%%%%%%%%%%%%%%%%%%%%%%%%%%%%%%%%%%%

The next theorem is completely analogous to \cite[Proposition~8.3]{bainmoel}, 
except that we now work in genus~4 instead of genus~3. It is a
simple Torelli type theorem for  relevant boundary strata without separating nodes.
\par
For any weighted boundary stratum $\cS$ let $\phi_\iota$ be the involution which changes each weight
to its negative. We say that $\cS$ is $\phi_\iota$-invariant, if $\phi_\iota(\cS)$ and $\cS$ are
topologically equivalent, in which case $\phi_\iota$ restricts to an involution of $\mathcal{S}$.
Among the relevant boundary strata without separating nodes, the $\phi_\iota$-invariant strata are
precisely the strata $(1,1)$, $(1,2)$, $(2,2)$, $(2,3)$, and $(3,3)$.  Note that whether or not a
weighted stratum is $\phi_\iota$-invariant depends only on its topological type and not on the
choice of weights.
\par

By the definition of $\phi_\iota$, each period matrix entry $\Psi(x)$ is equivariant with respect to
the involution $\phi_\iota$.  If $\mathcal{S}$ is $\phi_\iota$-invariant, we define $\mathcal{S}'$
to be the quotient of $\mathcal{S}$ by $\phi_\iota$.

% We fix a basis $\tau_1,\ldots,\tau_n$ of $N(\cS)$.
Recall from \S\,\ref{sec:boundary} that we have a canonical morphism $\CR\colon\mathcal{S}\to
A_\mathcal{S}$.  A basis $\tau_1, \ldots, \tau_n$  of $N(\mathcal{S})$ determines an isomorphism of
$A_\mathcal{S}$ with $(\cx^*)^n$, and in these coordinates, $\CR$ is simply the product of the
functions $\Psi(\tau_i)$.  By the above discussion, $\CR$ factors through $\phi_\iota$ to define a
morphism $\CR\colon \mathcal{S}'\to A_\mathcal{S}$.
\par
\begin{theorem} \label{thm:weaktorelli} Given a relevant stratum without separating nodes $\mathcal{S}$,
  the morphism $\CR$ is an embedding of $\mathcal{S}$ (or $\mathcal{S}'$ if $\mathcal{S}$ is
  $\phi_\iota$-invariant) in $A_\mathcal{S}$. 
\end{theorem}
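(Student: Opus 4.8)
The plan is to exploit the product decomposition $\mathcal{S}\cong\prod_i\moduli[0,n_i]$ (or its $\phi_\iota$-quotient) and to recover, from the coordinates of a point of $A_\mathcal{S}$, enough cross-ratios of the marked points on each $\proj^1$-component to reconstruct the configuration. A closed point in the image of $\CR$ records $\Psi(x)$ for \emph{every} $x$ in the lattice $N(\mathcal{S})\cap\bS_\zed(\mathcal{I}^\vee)$, since $\Psi$ is a homomorphism and the chosen $\tau_i$ form a basis. For any two edge-disjoint simple loops $\gamma_1,\gamma_2$ in $\Gamma(\mathcal{S})$, the element $\lambda(\gamma_1)\otimes\lambda(\gamma_2)$ lies in this lattice (each $\lambda(\gamma_j)\in\mathcal{I}^\vee$, and membership in $N(\mathcal{S})$ is Lemma~\ref{le:NSloops}), so its $\Psi$-value is read off from the image point. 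By that same lemma, if $\gamma_1$ and $\gamma_2$ meet in the single vertex $v$, then $\Psi(\lambda(\gamma_1)\otimes\lambda(\gamma_2))$ is exactly one cross-ratio $[\gamma_1^{\out}(v),\gamma_1^{\ii}(v),\gamma_2^{\out}(v),\gamma_2^{\ii}(v)]$ of four marked points on the component attached to $v$. The whole argument thus reduces to producing, at each vertex, enough such single-vertex loop pairs.

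For injectivity I would argue vertex by vertex. Fix a vertex $v$ carrying $n_v$ marked points. Because $\mathcal{S}$ has no separating node, the graph obtained from $\Gamma(\mathcal{S})$ by splitting off $v$ together with any prescribed pair of its incident half-edges stays sufficiently connected that, by the same Menger-type reasoning used in the proof of Theorem~\ref{thm:dimension_N}, each such pair of half-edges can be completed to a pair of edge-disjoint simple loops meeting \emph{only} at $v$. Ranging over the incident half-edges, these loop pairs realize a family of cross-ratios of the marked points on the $v$-component. Since a point of $\moduli[0,n_v]$ is determined by the cross-ratios of quadruples of its marked points, and since the only symmetry left unbroken is the simultaneous exchange of all incoming with all outgoing points---which fixes every cross-ratio, as $[a,b,c,d]=[b,a,d,c]$---we recover the configuration on the $v$-component exactly when $\mathcal{S}$ is not $\phi_\iota$-invariant, and up to $\phi_\iota$ otherwise. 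Combining over all (at most three) vertices reconstructs the point of $\mathcal{S}$, respectively of $\mathcal{S}'$, so $\CR$ is injective.

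For the immersion statement I would check that the very same cross-ratios furnish local coordinates: on each factor $\moduli[0,n_i]$ a suitable set of $n_i-3$ cross-ratios is an \'etale coordinate system, and the single-vertex loop pairs above supply such a set, so the differential of $\CR$ is injective everywhere. An injective immersion of smooth varieties with locally closed image is an embedding, which gives the theorem. I expect the genuine work---and the only place where genus four differs from the genus three case of \cite[Proposition~8.3]{bainmoel}---to be the combinatorial verification that the required single-vertex loop pairs do exist at every vertex of every graph in Figure~\ref{fig:graphs}. The cleanest route is to run through that finite list; in particular the symmetric strata $(2,2)$ and $(4,2)$ flagged above will need the ambiguity analysis to be carried out by hand, to confirm that $\phi_\iota$ is the entire failure of injectivity and that no further coincidences among weights collapse the available cross-ratios.
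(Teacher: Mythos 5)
Your overall skeleton---extract cross-ratios from pairs of edge-disjoint loops via Lemma~\ref{le:NSloops}, observe that the simultaneous in/out swap $\phi_\iota$ fixes every such cross-ratio, and then run through the finite list of Figure~\ref{fig:graphs}---matches the paper's. But the two general principles you invoke to avoid the actual computation both break down. First, the supply of ``single-vertex loop pairs'' is much thinner than your Menger-type claim suggests. For the irreducible stratum $(1,1)$ the only edge-disjoint loop pairs are $(\gamma_i,\gamma_j)$ with $\gamma_i$ the $i$-th self-loop, so the only quadruples whose cross-ratios you can read off are the six sets $\{P_i,Q_i,P_j,Q_j\}$---nowhere near all quadruples of the eight marked points. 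For the $[5]\times^5[5]$ stratum $(2,2)$ there are \emph{no} pairs of edge-disjoint loops meeting in only one vertex: any two edge-disjoint loops each use two of the five edges, hence both pass through both vertices, so every available $\Psi$-value is a \emph{product} of one cross-ratio on each component and your vertex-by-vertex reconstruction cannot even begin. Second, the assertion that ``a point of $\moduli[0,n_v]$ is determined by the cross-ratios of quadruples of its marked points'' is only true when all quadruples are available; with the restricted set actually at hand, recovering the points forces one to solve quadratic equations (e.g.\ recovering $P_3,Q_3$ from $R_{13}$ and $R_{23}$ in the stratum $(1,1)$), each carrying a two-fold ambiguity, so that a priori the fiber of $\CR$ could be an orbit of a group strictly larger than $\langle\phi_\iota\rangle$. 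The paper's proof consists precisely in checking, stratum by stratum, that the remaining cross-ratio relations eliminate all spurious solutions except the $\phi_\iota$-orbit; that verification is the actual content of the theorem and is absent from your argument, not merely deferred to the symmetric cases.

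A smaller point: your immersion step presupposes that some $n_i-3$ of the available cross-ratios form an \'etale coordinate system on each factor, which again is exactly what the explicit solving demonstrates rather than something that comes for free. So the proposal identifies the right ingredients, but the bridge between them---the case-by-case elimination of the quadratic ambiguities---is missing, and the general statements offered in its place fail for at least the strata $(1,1)$ and $(2,2)$.
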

\par
\begin{proof}
  We give the details for some strata where essential arguments show up and leave the remaining
  verifications to the reader.
  \par
  We first consider the irreducible stratum $(1,1)$.  This stratum is $\phi_\iota$-invariant, and
  the involution has the effect of swapping each pair $P_i, Q_i$.  Here $s_i\otimes s_j \in N(\cS)$
  for all $i \neq j$ and we abbreviate $\Psi(s_i\otimes s_j) = R_{ij}$ where
  $$ R_{ij} = [P_i,Q_i,P_j, Q_j]$$ 
  by Lemma~\ref{le:NSloops} or \cite[Proposition~8.3]{bainmoel}.  We fix $P_1=0$, $Q_1 = \infty$ and
  $P_2 = 1$.  We then have $R_{12} = Q_2$ and $P_3 R_{13} = Q_3$.  Given $R_{23}$, we need to solve
  a quadratic equation to recover $P_3$ and $Q_3$.  Similarly, given $R_{14}$ and $R_{24}$ we need
  to solve a quadratic equation to recover $P_4$ and $Q_4$.  Thus there are four possibilities for
  the tuple $(P_3, Q_3, P_4, Q_4)$.  The cross-ratio $R_{34}$ eliminates two of these solutions, and the
  remaining two are related by the involution $\phi_\iota$.
  \par
  Next we consider the ``$[5] \times ^5 [5]$''-stratum $(2,2)$.  Again, $\phi_\iota$ swaps each pair
  $P_i, Q_i$.  Here $s_i\otimes (s_j - s_k) \in
  N(\cS)$ for all distinct $i,j,k$.  We normalize
  $$P_1=Q_1 = 1, \quad P_2=Q_2=0, \quad P_5=Q_5 = \infty.$$
  Then 
  $$\begin{array}{ll} 
    \Psi(s_1 \otimes (s_4-s_2)) = (1-P_4)(1-Q_4), 
    &  \Psi(s_2 \otimes (s_4-s_1))  =P_4 Q_4 \\
    \Psi(s_1 \otimes (s_3-s_2)) =   (1-P_3)(1-Q_3), 
    &  \Psi(s_2 \otimes (s_3-s_1)) =P_3 Q_3. \\
  \end{array}
  $$
% = [P_2,P_5,P_4,P_1]= [P_2,P_5,P_4,P_1]
%\\ = [P_1,P_5,P_4,P_2][P_1,P_5,P_3,P_2]
  The first line determines two possibilities for $P_4$ and $Q_4$, and the second line determines
  two possibilities for $P_3$ and $Q_3$. Using $\Psi(s_3\otimes(s_1-s_2))$ eliminates two of the four
  possibilities for $(P_3, P_4, Q_3, Q_4)$, with the remaining two possibilities related by
  $\phi_\iota$. 
  \par
  As typical examples for the remaining cases we take the stratum $(1,4)$.  Here $N(\cS)$ is
  generated by $s_1\otimes s_2$, $s_1 \otimes s_3$, $s_4 \otimes s_2$ and $s_4 \otimes
  s_3$. Normalizing
  $$P_1=P_4=0, \quad Q_1=Q_4=\infty, \quad P_2=Q_2=1 $$ 
  we obtain
  $$\begin{array}{ll}
    \Psi(-s_1 \otimes s_2) = Q_5, 
    &  \Psi(-s_1 \otimes s_3) = Q_3, \\
    \Psi(-s_4 \otimes s_2) = P_5,
    &  \Psi(-s_4 \otimes s_3) = P_3, \\
  \end{array}
  $$
  so these four cross-ratios determine the remaining four points.
\end{proof}
\par

\paragraph{Gerritzen's equation.}

Given the above Torelli theorem, it is natural to ask what the
image of $\CR$ in the ambient torus is. For the irreducible 
stratum $(1,1)$ the question of finding the equation
cutting out the image of $\CR$ has been solved by \cite{gerritzen}.
For all but one exceptional stratum, we will be able to avoid the use of this equation.
For one exceptional stratum we indeed need to determine the image
of $\CR$ and this equation can be obtained as a limit of Gerritzen's equation.
\par
\begin{prop}[{\cite[Proposition~4.3.1]{gerritzen}}] \label{prop:gerritzeneq}
For the irreducible stratum, the image of $\CR$ in the ambient torus $A_{\cS}$ 
is given, in the coordinates introduced in the proof of 
Theorem~\ref{thm:weaktorelli}, as the vanishing locus of the 
function $F = \Delta H - G$, where
\begin{equation*}
\begin{aligned}
\Delta &= (R_{12}-1)(R_{13}-1)(R_{14}-1)(R_{23}-1)(R_{24}-1)(R_{34}-1) \\
H &=
R_{12}R_{13}R_{14}R_{23}R_{24}R_{34}-R_{12}R_{14}R_{24}-R_{13}R_{14}
R_{34}-R_{23}R_{24}R_{34} \\
&- R_{12}R_{13}R_{23} + R_{14}R_{23} + R_{13}R_{24} + R_{12}R_{34} \\ 
 G & = R_{12}R_{34}(R_{13}-1)^2(R_{14}-1)^2(R_{23}-1)^2(R_{24}-1)^2 \\
& +  R_{13}R_{24}(R_{12}-1)^2(R_{14}-1)^2(R_{23}-1)^2(R_{34}-1)^2 \\
&+  R_{14}R_{23}(R_{12}-1)^2(R_{13}-1)^2(R_{24}-1)^2(R_{34}-1)^2. \\
\end{aligned}
\end{equation*}
\end{prop}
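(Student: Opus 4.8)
The plan is to determine the image of $\CR$ directly by elimination in the explicit coordinates set up in the proof of Theorem~\ref{thm:weaktorelli}. For the irreducible stratum the underlying space is $\moduli[0,8]$, so $\dim\mathcal{S}=5$, while by Theorem~\ref{thm:dimension_N} the ambient torus $A_\mathcal{S}$ has dimension $g(g+1)/2-g=6$ (the four weights of the irreducible stratum form a basis, so $n=g=4$). Since $\CR$ is an embedding of the quotient $\mathcal{S}'$ by Theorem~\ref{thm:weaktorelli}, the closure of its image is an irreducible five-dimensional subvariety of the six-dimensional torus $A_\mathcal{S}$, hence an irreducible hypersurface. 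The coordinate ring of the torus is a unique factorization domain, so this hypersurface is the zero locus of a single function, and the whole problem is to identify that function with $F=\Delta H-G$ and to check that the image fills it out.

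First I would fix the normalization $P_1=0$, $Q_1=\infty$, $P_2=1$ from the proof of Theorem~\ref{thm:weaktorelli}, which leaves the five free coordinates $Q_2,P_3,Q_3,P_4,Q_4$ on $\mathcal{S}$. Writing each $R_{ij}=[P_i,Q_i,P_j,Q_j]$ out with the definition of the cross-ratio, the three functions $R_{12},R_{13},R_{14}$ become monomials in these coordinates, whereas $R_{23},R_{24},R_{34}$ become genuinely rational functions. This yields six explicit relations expressing the torus coordinates in terms of the five parameters.

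Next I would eliminate the five parameters. Concretely, one clears denominators, forms the resulting ideal in the polynomial ring generated by the $R_{ij}$ together with the parameters, and computes a Gr\"obner basis with the parameters ordered first (or takes iterated resultants). The generator of the elimination ideal not involving the parameters is the sought relation, and the remaining step is to verify that it is a scalar multiple of $\Delta H-G$. Finally I would check that $\Delta H-G$ is reduced, irreducible, and divisible by no monomial, so that $V(\Delta H-G)$ has no component in the toric boundary and coincides with $\CR(\mathcal{S})$ rather than a larger or smaller locus.

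The hard part is that the elimination is genuinely heavy: the relation is quadratic in each of the six coordinates $R_{ij}$ and of total degree up to twelve, so carrying it out and certifying the exact shape $\Delta H-G$ is only practical with a computer algebra system, and this brute-force route obscures why the answer takes its particular form. Conceptually, the irreducible stratum is the totally degenerate cusp of $\AVmoduli[4]$, where the exponentiated period entries $e^{2\pi i\tau_{ij}}$ specialize to the cross-ratios $R_{ij}$; from this viewpoint $\Delta H-G$ is the leading coefficient of the genus-four Schottky relation at that cusp, which is Gerritzen's derivation and the reason we simply cite \cite{gerritzen} for the statement.
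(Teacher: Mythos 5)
Your plan is sound but takes a genuinely different, and much heavier, route than the paper, which offers essentially no proof at all: it attributes the equation to Gerritzen (\cite[Proposition~4.3.1]{gerritzen}) and remarks only that ``the validity of the equation, hence the fact that we use the same conventions on cross-ratios as Gerritzen, can be checked by plugging in the definition of the cross-ratios.'' That is a \emph{verification} of a known candidate: one substitutes the parametrization $R_{ij}=[P_i,Q_i,P_j,Q_j]$ (in the normalization $P_1=0$, $Q_1=\infty$, $P_2=1$) into $\Delta H-G$ and confirms the identity, which gives the containment $\overline{\CR(\mathcal{S})}\subseteq V(F)$; your own observation that $\overline{\CR(\mathcal{S})}$ is an irreducible hypersurface in the six-dimensional torus (the dimension count $5$ versus $6$ is correct), together with irreducibility of $F$, then upgrades containment to equality. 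You instead propose to \emph{derive} $F$ from scratch by Gr\"obner-basis elimination of the five parameters. That would work in principle and would make the proposition independent of Gerritzen, but it is far more expensive than the substitution check, and as you concede you have not actually carried out the elimination or the irreducibility check, so as written your argument is a program rather than a proof. If you want a self-contained argument, the cheaper path is the paper's: take $F$ as given, verify the identity by substitution (this is a routine rational-function identity in five variables), and then invoke your hypersurface-plus-irreducibility observation --- which is a genuinely useful point that the paper leaves implicit. Your closing remark correctly identifies Gerritzen's conceptual derivation (the leading term of the genus-four Schottky relation at the totally degenerate cusp) as the reason the paper simply cites the result.
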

\par
The validity of the equation, hence the fact that we use the
same conventions on cross-ratios as Gerritzen, can be checked
by plugging in the definition of the cross-ratios.

\paragraph{A hypothetical sufficiency theorem.}

In \cite{bainmoel}, we proved that the necessary condition of Theorem~\ref{thm:RMboundary} is also
sufficient in genus three.  The proof relied heavily on the fact that the Schottky problem is
trivial in genus three, that is, the image of the Torelli map $\moduli\to \A$ is dense.  In higher
genus, we do not know whether our condition is also sufficient; however, under the assumption that
a component $\RA[\mathcal{O}, M]$ of the real multiplication is contained in the Schottky locus
sufficiency holds at least in some cases.

We say that a stable curve is \emph{nice} if the complement of any two nodes is connected (such
curves are sometimes called three-connected).  A
boundary stratum is nice if it consists of nice stable curves.

\begin{theorem}
  \label{thm:suff}
  Assuming that a component $\RA[\mathcal{O}, M]$ is contained in the closure of $t(\moduli[4])$, the
  necessary condition of Theorem~\ref{thm:RMboundary} for  geometric genus zero stable curve to lie
  in the boundary of $\RM[\mathcal{O}, M]$ is also sufficient for nice stable curves.
\end{theorem}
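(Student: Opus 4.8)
The plan is to deduce curve-side sufficiency from the abelian-variety-side sufficiency already established in \cite{bainmoel}, using the Schottky hypothesis to match the two boundaries and Theorem~\ref{thm:weaktorelli} to pin down the actual stable curve. Fix once and for all a toroidal compactification $\overline{\AVmoduli[4]}$ to which the Torelli map extends as a proper morphism $\bar t\colon\barmoduli[4]\to\overline{\AVmoduli[4]}$, chosen so that the geometric genus zero boundary strata of $\barmoduli[4]$ map into the totally degenerate (torus-rank $4$) part of the boundary, and so that $\CR$ is recovered as $\bar t$ followed by the projection of this boundary onto $A_\cS$. Recall from \cite[\S5]{bainmoel} that on the abelian-variety side the condition is already sufficient: a totally degenerate point lies in $\overline{\RA[{\Ord,M}]}$ if and only if it lies on some translated RM-torus $T_{\cS,E}$ with $\cS$ admissible and $E\in E(\mathcal{I},M)$. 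Thus for any such $X$ the point $\bar t(X)$ already belongs to $\overline{\RA[{\Ord,M}]}$; the entire content of the theorem is to promote this to membership of the \emph{curve} $X$ in $\partial\RM[{\Ord,M}]$.

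First I would record the consequence of the hypothesis. A generic point of the irreducible locus $\RA[{\Ord,M}]$ is an abelian fourfold whose endomorphism algebra contains the quartic field $F$; since a field has no nontrivial idempotents, such an abelian variety is indecomposable, and an indecomposable principally polarized fourfold lying in $\overline{t(\moduli[4])}$ is necessarily the Jacobian of a smooth curve. Hence $t(\RM[{\Ord,M}])$ is dense in $\RA[{\Ord,M}]$. As $\bar t$ is proper, its image is closed, and the usual sandwich
$$\overline{\RA[{\Ord,M}]}=\overline{\bar t(\RM[{\Ord,M}])}\subseteq \bar t\bigl(\overline{\RM[{\Ord,M}]}\bigr)\subseteq\overline{\RA[{\Ord,M}]}$$
yields the closure identity $\bar t\bigl(\overline{\RM[{\Ord,M}]}\bigr)=\overline{\RA[{\Ord,M}]}$, the closures taken in $\barmoduli[4]$ and $\overline{\AVmoduli[4]}$ respectively. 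In particular every boundary point of $\overline{\RA[{\Ord,M}]}$ is the image under $\bar t$ of some stable curve in $\overline{\RM[{\Ord,M}]}$.

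Now let $X$ be a nice geometric genus zero stable curve satisfying the necessary condition of Theorem~\ref{thm:RMboundary}, so $\CR(X)\in T_{\cS,E}$ for an admissible stratum $\cS$ and an extension $E$, and set $y:=\bar t(X)$. Then $y\in T_{\cS,E}\subset\overline{\RA[{\Ord,M}]}$, so by the closure identity there is a stable curve $X'\in\overline{\RM[{\Ord,M}]}$ with $\bar t(X')=y$. Because $y$ is totally degenerate, $X'$ is forced to have geometric genus zero, whence $X'\in\partial\RM[{\Ord,M}]$. It remains only to identify $X'$ with $X$; this is exactly a Torelli statement at the boundary, and it is here that the niceness hypothesis is indispensable.

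The identification I would carry out in two steps, and this is where I expect the real difficulty to lie. The toroidal boundary stratum containing $y$ remembers the polarized lattice $\bigl(H_1(\Gamma;\zed),\langle\,,\,\rangle\bigr)$, equivalently the cycle matroid, of the dual graph; since $X'$ and $X$ map to the same stratum, the dual graph of $X'$ is $2$-isomorphic to $\Gamma(\cS)$, and for three-connected graphs Whitney's $2$-isomorphism theorem (see \cite{bondymurty}) upgrades this to an isomorphism, so $X'$ lies in the stratum $\cS$ itself. With $X$ and $X'$ both in $\cS$ and $\CR(X)=\CR(X')=y$, Theorem~\ref{thm:weaktorelli} gives that $\CR$ embeds $\cS$ (or the quotient $\cS'=\cS/\phi_\iota$); and since two points of $\cS$ differing by $\phi_\iota$ carry the same underlying stable curve---the involution merely exchanges the two branches $P_i,Q_i$ at each node---we conclude $X'=X$ in $\barmoduli[4]$, so $X\in\partial\RM[{\Ord,M}]$. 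The two delicate points, both isolated above, are the verification that the approach of $\overline{\RA[{\Ord,M}]}$ to $y$ stays within the open stratum rather than a deeper degeneration (handled by taking $\cS$ to be the minimal stratum whose cone contains $y$), and the invocation of graph Torelli, which is precisely the role played by the hypothesis that the stable curves in question are nice.
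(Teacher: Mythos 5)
Your strategy is genuinely different from the paper's. You argue globally: extend the Torelli map to a proper morphism $\bar t\colon\barmoduli[4]\to\overline{\AVmoduli[4]}$, deduce $\bar t\bigl(\overline{\RM[{\Ord,M}]}\bigr)=\overline{\RA[{\Ord,M}]}$ from the Schottky hypothesis and properness, and then try to identify the fibre of $\bar t$ over the totally degenerate point $y=\CR(X)$ by a boundary Torelli argument. The paper argues locally: it extends $\CR$ to an \emph{open} map $\Xi\colon U\to\cx^m\times\CR(\cS)$ on a plumbing neighborhood $U$ of $\cS$ in $\barmoduli[4]$ (this is where niceness enters), observes that the RM condition cuts out a translate $T$ of a torus in these exponentiated period coordinates, and reduces the claim to the toric statement that $\Xi(p)$ lies in the closure of $T$, verified by exhibiting an explicit one-dimensional subtorus $T_1\subset T$ limiting on $\Xi(p)$ as in \cite[Theorem~8.14]{bainmoel}; the Schottky hypothesis is used only to know that the points of $T_1$ are Jacobians, so that the openness of $\Xi$ applies.

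The essential gap in your proposal is the step where you place $y$ in $\overline{\RA[{\Ord,M}]}$ by ``recalling'' from \cite[\S5]{bainmoel} that the condition is already sufficient on the abelian-variety side. What that reference establishes is the \emph{necessary} condition (Theorem~\ref{thm:RMboundary} here); the converse --- that every point of a translated RM-torus $T_{\cS,E}$ in the toroidal boundary is actually a limit of points of $\RA[{\Ord,M}]$ --- is precisely the nontrivial analytic content of sufficiency, and it is exactly what the $T_1$ construction supplies. By citing it as known you assume the hard half of the theorem. A second, lesser, problem is the fibre analysis: the fibre of $\bar t$ over $y$ need not be $\{X\}$ even for nice $X$, since a stable curve with separating nodes has the same cycle matroid as its bridge-contraction and can map to the same boundary point whenever $y$ happens to lie on the subtorus where $\Psi(a)=1$ for all $a\in F_1\otimes F_2$; moreover Whitney's $2$-isomorphism theorem requires vertex $3$-connectivity while ``nice'' is edge $3$-connectivity. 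For such special $y$ your argument only produces \emph{some} curve of $\overline{\RM[{\Ord,M}]}$ over $y$, not $X$ itself. The paper's local openness argument sidesteps both issues because it never needs to control the full Torelli fibre.
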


We emphasize that this theorem has no use outside of this paper, as we are proving that the
hypothesis of the theorem never holds.  As the proof of Theorem~\ref{thm:suff} is essentially the
same as in \cite{bainmoel}, we only sketch the idea of the proof here.

Consider a nice boundary stratum $\mathcal{S}$ and choose a basis $\tau_1, \ldots, \tau_n$ of
$N(\mathcal{S})$.  By Theorem~\ref{thm:weaktorelli}, the map $\CR\colon\mathcal{S}\to(\cx^*)^n$ is
either two-to-one or biholomorphic onto its image $\CR(\mathcal{S})$.  In either case this map is
open.  In genus three, this map is open as well and is also onto.  In \cite{bainmoel}, we extended
this to an open map $\Xi\colon U\to \cx^m\times(\cx^*)^n$ (for some neighborhood $U$ of
$\mathcal{S}$) sending $\mathcal{S}$ to $\{\bzero\}\times(\cx^*)^n$.  In genus four, the map $\Xi$
is defined in the same way and is also an open map
$\Xi\colon U\to\cx^m\times\CR(\mathcal{S})\subset\cx^m\times(\cx^*)^n$.  In order to define the map
$\Xi$, it is necessary for $\mathcal{S}$ to be nice.

The map $\Xi$ sends a component of the real multiplication locus to a subvariety
$T\subset(\cx^*)^m\times(\cx^*)^n$ which is a translate of a torus.  As $\Xi$ is open, to show that
a point $p\in\mathcal{S}$ is in the boundary of the real multiplication locus, it suffices to show
that $\Xi(p)$ is in the boundary of $T$.  Thus the problem is reduced to calculating the boundary of
a torus.  In \cite[Theorem~8.14]{bainmoel}, we construct an explicit one-dimensional torus
$T_1\subset T$ which limits on $\Xi(p)$, showing that $\Xi(p)$ is indeed in the boundary of $T$.

In genus three, this proof uses in an essential way that a generic Abelian variety is a Jacobian.
In higher genus, this proof breaks down, since we don't know whether the torus $T_1$ is contained in
the Schottky locus.  However, assuming that the component of the real multiplication we are
considering is contained in the Schottky locus, this is automatic and the proof carries through.

%%%%%%%%%%%%%%%%%%%%%%%%%%%%%%%%%%%%%%%%%%%%%%%%%%%%%%%%%%%%%%%%%%%%%%%%%%%%%%%%%%%%
\section{Checking nice boundary strata without separating nodes}
\label{sec:check}
%%%%%%%%%%%%%%%%%%%%%%%%%%%%%%%%%%%%%%%%%%%%%%%%%%%%%%%%%%%%%%%%%%%%%%%%%%%%%%%%%%%%

In this section we develop two criteria on the dual graphs
of strata to test whether Proposition~\ref{prop:bdcondition} holds.  These criteria will apply to
each of the strata in Figure~\ref{fig:graphs} except for two exceptional strata which we handle with
{\it ad hoc} arguments.

In the following, it will be useful to encode loops on such a dual graph by the labeling of the
edges.  We use the convention that the first digit corresponds to the edge used first and an
overline corresponds to using the edge in the direction pointing opposite the arrow. E.g.\ in the
stratum $(1,2)$, the loop $(3\bar{5})$ turns the counterclockwise around the middle circle, starting
at the upper vertex.
\par
\paragraph{The disjoint loop argument.}

The first criterion rules out strata whose dual graphs contain disjoint loops:
\begin{prop}
Let $\Gamma(\cS)$ be the dual graph of a relevant nice boundary stratum 
without separating nodes $\cS$. Suppose $\Gamma$ contains
two vertex-disjoint simple loops $\gamma_1$ and $\gamma_2$. 
Then Proposition~\ref{prop:bdcondition} holds for $\cS$.
\end{prop}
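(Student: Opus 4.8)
The plan is to show that vertex-disjointness of the two loops forces the image $\CR(\cS)$ into a codimension-one subtorus of $A_\cS$, and then to read off from the character-group computation of Proposition~\ref{prop:toruschar2} that this subtorus meets every translate of $T_\cS$ in dimension at most $\dim T_\cS - 1 \le g-2 = 2$.

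The first and only substantive step is to produce the correct element of $N(\cS)$. Since $\gamma_1$ and $\gamma_2$ share no vertex, they in particular share no edge, so Lemma~\ref{le:NSloops} applies to $a = \lambda(\gamma_1)\otimes\lambda(\gamma_2)\in N(\cS)$; and because both loops are simple, the same lemma expresses $\Psi(a)$ as a product of cross-ratios indexed by $\gamma_1\cap\gamma_2$. As $\gamma_1\cap\gamma_2=\emptyset$, this is the empty product, so $\Psi(a)\equiv 1$ on $\cS$. Writing $\chi_a$ for the character of $A_\cS$ determined by $a\in N(\cS)$, the composite $\chi_a\circ\CR$ is precisely $\Psi(a)$, so $\CR(\cS)$ is trapped in $\ker\chi_a$, hence in a coset of its identity component, which I call $U_a$. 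This $U_a$ has codimension exactly one, because $a\neq 0$: indeed $F$ is a field and $\lambda(\gamma_1),\lambda(\gamma_2)$ are nonzero, so $\ev(a)=\lambda(\gamma_1)\lambda(\gamma_2)\neq 0$, forcing $\chi_a$ to be a nontrivial character.

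Next I would feed $U_a$ into Proposition~\ref{prop:toruschar2}. Its annihilator is the line $\Ann(U_a)=\ratls\,a$, so $\ev(\Ann(U_a))=\ratls\,\ev(a)$ is one-dimensional. The proposition therefore gives $\dim T_\cS-\dim(U_a\cap T_\cS)=1$, and combining with $\dim T_\cS\le g-1=3$ from Proposition~\ref{prop:RMtorusdimension} (available since $\cS$ is admissible, which is in any case required for $T_{\cS,E}$ to be defined) yields $\dim(U_a\cap T_\cS)\le 2$.

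To conclude I would pass from $T_\cS$ to its translate $T_{\cS,E}$: as $\CR(\cS)$ lies in a coset of $U_a$ and $T_{\cS,E}$ is a coset of $T_\cS$, any nonempty intersection $\CR(\cS)\cap T_{\cS,E}$ is contained in a coset of $U_a\cap T_\cS$, so $\dim(\CR(\cS)\cap T_{\cS,E})\le\dim(U_a\cap T_\cS)\le 2$, which is exactly the assertion of Proposition~\ref{prop:bdcondition} for $\cS$. The heart of the argument is the opening observation that vertex-disjointness collapses the cross-ratio product to $1$; everything afterward is bookkeeping in the character lattice. The only point demanding a little care is this last coset step, namely verifying that replacing $T_\cS$ by the translate $T_{\cS,E}$ and $\ker\chi_a$ by the coset of its identity component containing the irreducible set $\CR(\cS)$ leaves the dimension estimate unchanged.
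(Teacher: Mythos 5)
Your proof is correct and follows essentially the same route as the paper's: the same element $a=\lambda(\gamma_1)\otimes\lambda(\gamma_2)$, the observation via Lemma~\ref{le:NSloops} that vertex-disjointness makes $\Psi(a)\equiv 1$, the nonvanishing of $\ev(a)$ because $F$ is a field, and the dimension count of Proposition~\ref{prop:toruschar2}. The paper phrases this as a contradiction ($T_{\cS,E}\subset\overline{\CR(\cS)}$ would force $T_{\cS,E}$ into the subtorus annihilated by $a$), whereas you argue directly and take a bit more care with the coset/translate bookkeeping; the mathematical content is identical.
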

\par
\begin{proof}
Suppose the contrary holds, i.e.\ $T_{\cS,E} \subset \overline{\CR(\cS)}$. Let 
$a= \lambda(\gamma_1) \otimes \lambda(\gamma_2) \in N(\cS)$ as in Lemma~\ref{le:NSloops}.
We claim that $\ev(a) \neq 0$. To justify this, it suffices to show
that the field element associated with any simple loop is non-zero.
This is a consequence of the isomorphism $\lambda: H_1(\Gamma(\cS), \zed) \to 
{\mathcal I}^\vee$ and the fact that such a loop is non-zero in $H_1(\Gamma(\cS, \zed))$.
\par
Since the loops are vertex-disjoint, the Lemma~\ref{le:NSloops} implies $\Psi(a) \equiv 1$, 
i.e.\ $T_{\cS,E}$ contained in the torus $U$ with $\Ann(U) = \langle a \rangle$. 
Together with $\ev(a) \neq 0$ this contradicts Proposition~\ref{prop:toruschar2}.
\end{proof}

\paragraph{The shared vertex argument.} Recall that a graph is called \emph{$n$-connected} if it can not be
disconnected by removing $n-1$ edges.  We say that it is \emph{precisely $n$-connected} if it is
$n$-connected and can be disconnected by removing $n$ edges.  Note that the disjoint
loop argument applies to any precisely $2$-connected stratum.

\begin{prop}
  Let $\Gamma(\cS)$ be the dual graph of a relevant nice boundary stratum without separating nodes
  $\cS$.  Suppose $\Gamma(\cS)$ contains two edge-disjoint loops $\gamma_1$ and $\gamma_2$ having
  exactly one vertex $v$ in common.  Suppose moreover, that there is some precisely $2$-connected graph
  $\Gamma'$ obtained from $\Gamma(\cS)$ by gluing an edge $e$ into $v$ such that $\gamma_1$ and
  $\gamma_2$ yield vertex-disjoint loops in $\Gamma'$, and moreover for all such graphs $\Gamma'$
  the following condition holds.  There is a loop $\gamma_3$ on $\Gamma'$ such that $\gamma_3$ and
  $\gamma_1$ or $\gamma_3$ and $\gamma_2$ are vertex-disjoint.
\end{prop}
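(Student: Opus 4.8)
The conclusion to be proved is that Proposition~\ref{prop:bdcondition} holds for $\cS$, i.e.\ that $\dim\bigl(\CR(\cS)\cap T_{\cS,E}\bigr)\le 2$ for every extension class $E$. The plan is to argue by contradiction in the spirit of the disjoint loop argument, but to run that argument not on $\cS$ itself (where $\Psi(\lambda(\gamma_1)\otimes\lambda(\gamma_2))$ is a genuine cross-ratio, not identically $1$) but on the degeneration $\Gamma'$, where $\gamma_1$ and $\gamma_2$ have been pulled apart. First I would reduce to a single case: the intersection lies inside $\CR(\cS)$, which has dimension $3$ by Theorem~\ref{thm:weaktorelli}, and inside $T_{\cS,E}$, whose dimension is $\dim T_\cS\le 3$ by Proposition~\ref{prop:RMtorusdimension}. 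Hence the only possibility to exclude is $\dim\bigl(\CR(\cS)\cap T_{\cS,E}\bigr)=3$, in which case the intersection is dense in each factor, so $\dim T_\cS=3$ and $\overline{\CR(\cS)}=\overline{T_{\cS,E}}$.

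Next I would fix a graph $\Gamma'$ as in the hypothesis and let $D\subset\overline{A_\cS}$ be the toric boundary divisor corresponding to this degeneration; it is itself a torus with character lattice $N(\cS')$, so $D\cong A_{\cS'}$, and for $\tau\in N(\cS')$ the function $\Psi(\tau)$ restricts to the corresponding coordinate on $D$. Set $Z:=\overline{T_{\cS,E}}\cap D$, a coset of a subtorus $T_Z\subseteq D$. Since $\overline{\CR(\cS)}$ is three-dimensional and not contained in $D$, one checks $\dim Z=2$. The key computation is the annihilator of $T_Z$: using $\Ann(T_\cS)=\Ann(\Lambda^1)$ together with $\Ann(\Lambda^1)=\Ker(\ev)$ from Corollary~\ref{cor:AnnisKer}, and toric duality, a character $\tau\in N(\cS')$ is trivial on $T_Z$ exactly when it is trivial on $T_\cS$, so that $\Ann(T_Z)=N(\cS')\cap\Ker(\ev)$.

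Now the loops enter. In $\Gamma'$ the pair $\gamma_1,\gamma_2$ is vertex-disjoint and edge-disjoint, and by hypothesis there is a loop $\gamma_3$ vertex-disjoint from $\gamma_1$ (say). Lemma~\ref{le:NSloops} then produces $a=\lambda(\gamma_1)\otimes\lambda(\gamma_2)$ and $b=\lambda(\gamma_1)\otimes\lambda(\gamma_3)$ in $N(\cS')$ with $\Psi(a)\equiv\Psi(b)\equiv 1$ on $\cS'$, the cross-ratio products being empty. Choosing $\gamma_3$ homologically independent from $\gamma_2$, the images $\ev(a)=\lambda(\gamma_1)\lambda(\gamma_2)$ and $\ev(b)=\lambda(\gamma_1)\lambda(\gamma_3)$ are linearly independent in $F$; hence $a,b$ are independent modulo $\Ann(T_Z)=N(\cS')\cap\Ker(\ev)$ and restrict to two independent characters of $T_Z$. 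Therefore $Z\cap\{\Psi(a)=\Psi(b)=1\}$ has codimension $2$ in $Z$, i.e.\ dimension $0$. On the other hand the boundary image $\CR(\cS')$ lies in $\overline{\CR(\cS)}\cap D=Z$ and in $\{\Psi(a)=\Psi(b)=1\}$, so $\dim\CR(\cS')\le 0$; but because $\Gamma'$ is precisely $2$-connected, the cross-ratio map on $\cS'$ has a one-dimensional fiber — the relative \Mobius scaling across the $2$-edge-cut — whence $\dim\CR(\cS')=\dim\cS'-1=1$. This contradiction completes the argument.

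The step I expect to be the main obstacle is the precise control of $\CR$ along the non-relevant boundary stratum $\cS'$: establishing simultaneously that $\dim Z=2$ and that $\dim\CR(\cS')=1$, so that the naive boundary image sits in $Z$ with codimension exactly one. It is this ``defect one'' that makes a single loop pair insufficient — one relation only cuts $Z$ down to dimension $1$, which is consistent with $\dim\CR(\cS')=1$ and yields no contradiction — and forces the use of the second relation coming from $\gamma_3$. This also explains the shape of the hypothesis: since one cannot predict in advance which divisor $D$ the closure $\overline{T_{\cS,E}}$ actually reaches, the condition is required for \emph{every} admissible $\Gamma'$, so that the second relation is always available. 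A secondary point to verify, stratum by stratum, is the homological independence of the chosen loops, which is exactly what guarantees that $\ev(a)$ and $\ev(b)$ span a two-dimensional subspace of $F$.
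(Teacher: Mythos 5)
Your overall strategy --- pass to the degeneration $\Gamma'$ where $\gamma_1$ and $\gamma_2$ become vertex-disjoint, then use $\gamma_3$ to impose a second independent relation and derive a dimension contradiction --- is the same as the paper's, but the implementation has a genuine gap in how the degenerate locus is located. You place $\CR(\cS')$ on a toric boundary divisor $D\subset\overline{A_\cS}$ with character lattice $N(\cS')$. In fact, for the degenerations in question the new node's weight lies in the existing weight set, so $N(\cS')=N(\cS)$ and $\CR$ extends to a morphism $\tilde{\cS}\to A_\cS$ landing in the \emph{interior}: the only thing that happens at $\cS'$ is that the single cross-ratio $\Psi(a)$, $a=\lambda(\gamma_1)\otimes\lambda(\gamma_2)$, tends to $1$ (the colliding points are $\gamma_i^{\ii}(v),\gamma_i^{\out}(v)$, and $[a,b,c,d]=1$ exactly when $a=b$ or $c=d$). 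So the correct receptacle is the codimension-one subtorus $U\subset A_\cS$ with $\Ann(U)=\langle a\rangle$, not a divisor at infinity. This matters for your unproved claim that $\dim Z=2$ for an \emph{arbitrarily fixed} $\Gamma'$: nothing forces $\overline{T_{\cS,E}}$ to reach the particular divisor you chose. The paper instead intersects $T_{\cS,E}$ with $U$ and applies Proposition~\ref{prop:toruschar2} (using $\ev(a)\neq 0$, which holds because $\lambda$ is injective on homology) to get a nonempty, \emph{irreducible}, two-dimensional $U\cap T_{\cS,E}$; since $\overline{\CR(\cS)}\cap U$ is the union of the finitely many $\CR(\cS')$, irreducibility pins the intersection inside a single $\CR(\cS')$ --- this is the step that selects which $\Gamma'$ to work with, and it is missing from your argument.

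The endgame also differs in a way that costs you. The paper's contradiction is internal to the tori: with $b$ coming from $\gamma_3$, one has $\Psi(b)\equiv 1$ on $\CR(\cS')$, hence $U_2\cap T_{\cS,E}=U\cap T_{\cS,E}$ is two-dimensional, while Proposition~\ref{prop:toruschar2} with $\dim\ev\langle a,b\rangle=2$ forces it to be one-dimensional. No information about $\dim\CR(\cS')$ is needed. Your contradiction instead requires the lower bound $\dim\CR(\cS')\ge 1$, which you justify by asserting a one-dimensional ``relative \Mobius scaling'' fiber of $\CR$ across the $2$-edge-cut of $\Gamma'$; this is not established anywhere (indeed Theorem~\ref{thm:weaktorelli} shows $\CR$ is finite on the precisely $2$-connected relevant strata, so the analogous claim for $\cS'$ is at best delicate) and should be removed in favor of the purely torus-theoretic contradiction. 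Your closing observations --- that a single relation is insufficient, that the hypothesis must hold for every admissible $\Gamma'$ because one cannot predict which degeneration is reached, and that the homological independence of $\gamma_3$ from $\gamma_2$ (equivalently $\dim\ev\langle a,b\rangle=2$) must be checked stratum by stratum --- are all correct and show you understand why the statement is shaped as it is; the gap is in making the selection of $\cS'$ and the nonemptiness of the two-dimensional intersection rigorous.
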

\par
\begin{proof}
  We let $\tilde{\mathcal{S}}$ be the partial Deligne-Mumford compactification of $\mathcal{S}$
  obtained by adjoining any weighted stable curve which has the same set of weights as the curves
  parametrized by $\mathcal{S}$.  For each degeneration $\mathcal{S}'$ in
  $\tilde{\mathcal{S}}\setminus\mathcal{S}$, we then have $N(\mathcal{S'}) = N(\mathcal{S})$, so the
  morphism $\CR$ extends to a morphism $\CR\colon \tilde{\mathcal{S}}\to A_\mathcal{S}$.

  Now suppose the contrary holds, i.e.\ $T_{\cS,E} \subset \overline{\CR(\cS)}$.  Let $a =
  \lambda(\gamma_1) \otimes \lambda(\gamma_2) \in N(\cS)$ and consider the intersection of
  $\CR(\tilde{\mathcal{S}})$ with the subtorus $U$ given by $\Ann(U) = \langle a \rangle$.  This
  intersection is nonempty and consists of the union of all degenerations $\CR(\mathcal{S}')$ which
  correspond to some graph $\Gamma'$ as in the statement.  By Proposition~\ref{prop:toruschar2}, the
  intersection $U\cap T_{\mathcal{S}, E}$ is a translate of a two-dimensional subtorus of
  $A_\mathcal{S}$, thus it is contained in one degeneration $\CR(\mathcal{S}')$ of
  $\CR(\mathcal{S})$ as above.  In what follows we fix this degeneration $\mathcal{S}'$.

  Let $b = \lambda(\gamma_1) \otimes \lambda(\gamma_3)$ resp.\ $\lambda(\gamma_3) \otimes
  \lambda(\gamma_2)$ depending on which loops are disjoint on $\cS'$. The preceding argument
  together with Lemma~\ref{le:NSloops} imply that on $\CR(\cS') \cap U$ the function $\Psi(b)$ is
  identically one.  Consider the torus $U_2$ defined by $\Ann(U_2) = \langle a, b \rangle$.  Since
  $\Psi(b) \equiv 1$ on $\CR(\mathcal{S}')$, we have $U_2\cap T_{\mathcal{S}, E} = U\cap
  T_{\mathcal{S}, E}$, so this intersection is two-dimensional.  Since $\dim \ev(\Ann(U_2)) = 2$,
  this contradicts Proposition~\ref{prop:toruschar2}.
\end{proof}
\par
These two arguments allow us to prove Proposition~\ref{prop:bdcondition} in all but
two cases. The disjoint loop argument applies to the $2$-connected strata $(1,2)$,
$(1,4)$, $(2,3)$,  $(2,4)$, $(3,1)$, $(3,2)$, $(3,3)$, $(4,1)$, and $(4,3)$.
\par
To deal with the stratum $(1,1)$ we apply the shared vertex argument to $\gamma_1 = (1)$
and $\gamma_2=(2)$. There is only one precisely $2$-connected degeneration, namely
the stratum $(1,2)$. Obviously $\gamma_3$  with the required properties exists.
\par
To deal with the stratum $(1,3)$ we apply the shared vertex argument to $\gamma_1 = (1)$
and $\gamma_2=(3\bar{4})$.  There are three precisely $2$-connected degenerations
that make $\gamma_1$ and $\gamma_2$ disjoint. The reader will check easily
that in all three cases a loop $\gamma_3$ with the required properties exists.
\par
To deal with the stratum $(2,1)$ we use the loops $\gamma_1 = (1)$ and $\gamma_2=(2\bar{3})$
for the shared vertex argument. To deal with the stratum $(2,5)$ we use the 
loops $\gamma_1 = (1\bar{2})$ and $\gamma_2=(3\bar{4})$.
To deal with the stratum $(3,4)$ we use the 
loops $\gamma_1 = (3\bar{6})$ and $\gamma_2=(2\bar{5})$ for the shared vertex argument.
To deal with the stratum $(4,4)$ we use the loops $\gamma_1 = (1\bar{5})$
and $\gamma_2=(2\bar{6})$. In all these cases, there is only one precisely $2$-connected
degeneration, and the required $\gamma_3$ exists.
\par

\paragraph{The exceptional cases 'doubled triangle' and $[5] \times^5 [5]$.}

Finally, we treat two exceptional cases separately.
%In order two treat two exceptional strata $\cS$ we introduce some notation for
%subtori. Given a basis $B$ of the ambient torus $T_\cS$.  
%We may specify a rank one subtorus $U$ in
%$R_\cS$ by its character vector $\chi_U \in \reals^{\dim N(\cS)}$. 
%Dually, we may specify
%$R_\cS$ as being the subtorus orthogonal (with respect to the standard dot
%product) to some normal vectors $n_i$, $i=1,\ldots,\dim N(\cS)-3$. 
%[MAKE THIS AND THE FOLLOWING PROOFS MORE UNDERSTANDABLE!]
\par 
\begin{prop} 
Proposition~\ref{prop:bdcondition} holds for the ``$[5] \times^5 [5]$-stratum'' 
given by the graph $(2,2)$. 
\end{prop}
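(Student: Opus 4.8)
The dual graph of the $[5]\times^5[5]$-stratum consists of two vertices joined by five edges, so $\cS$ is a finite quotient of $\moduli[0,5]\times\moduli[0,5]$ and has dimension $6-2=4$. Among its weights there are $n=5$ distinct ones (with $r_5=-(r_1+\cdots+r_4)$), so Theorem~\ref{thm:dimension_N} gives $\dim N(\cS)=\tfrac{4\cdot5}{2}-5=5$, hence $\dim A_\cS=5$. Since $\cS$ is $\phi_\iota$-invariant, Theorem~\ref{thm:weaktorelli} embeds $\cS'=\cS/\phi_\iota$ into $A_\cS$, so $\CR(\cS)$ is a hypersurface of dimension $4$. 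By Proposition~\ref{prop:RMtorusdimension} we have $\dim T_\cS\le g-1=3$; if $\dim T_\cS\le 2$ then $\dim(\CR(\cS)\cap T_{\cS,E})\le\dim T_{\cS,E}\le 2$ and we are done, so I assume $\dim T_\cS=3$. In that case the intersection has dimension $\le 2$ unless it equals $3=\dim T_{\cS,E}$, which, $T_{\cS,E}$ being an irreducible torus coset, would force $T_{\cS,E}\subset\overline{\CR(\cS)}$. The plan is to rule this out for every extension class $E$.

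\textbf{The defining equation.} First I would determine the equation cutting out the hypersurface $\overline{\CR(\cS)}$ in $A_\cS$. Normalizing $P_1=Q_1=1$, $P_2=Q_2=0$, $P_5=Q_5=\infty$ as in the proof of Theorem~\ref{thm:weaktorelli}, the four coordinates $(1-P_4)(1-Q_4)$, $P_4Q_4$, $(1-P_3)(1-Q_3)$, $P_3Q_3$ determine the unordered pairs $\{P_4,Q_4\}$ and $\{P_3,Q_3\}$, while a fifth coordinate fixes the pairing between the two components; eliminating $P_3,P_4,Q_3,Q_4$ then yields a single equation $F=0$. Equivalently, since the $[5]\times^5[5]$-stratum is the degeneration of the irreducible stratum $(1,1)$ obtained by splitting its vertex so that each self-loop becomes a cross-edge, $F$ can be produced as the initial form of Gerritzen's polynomial $\Delta H-G$ of Proposition~\ref{prop:gerritzeneq} under the one-parameter degeneration carrying the coordinates $R_{ij}$ to the boundary; this is the one place where the explicit image is needed. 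A useful structural feature, from Lemma~\ref{le:NSloops}, is that on $\CR(\cS)$ every coordinate $\Psi(a)$ factors as a cross-ratio in the points $P_3,P_4$ times the same cross-ratio in $Q_3,Q_4$ (one factor per vertex), the two being interchanged by $\phi_\iota$; thus $\CR(\cS)$ is the image of $\CR(\moduli[0,5])\times\CR(\moduli[0,5])$ under coordinatewise multiplication, which organizes the monomials of $F$.

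\textbf{Non-containment by a lonely-monomial argument.} Write $F=\sum_m c_m\chi^m$ as a Laurent polynomial on $A_\cS$, the exponents $m$ lying in the character lattice $N(\cS)_\zed$. Because $\Ann(T_\cS)=\Ann(\Lambda^1)\cap N(\cS)=\Ker(\ev|_{N(\cS)})$ by Corollary~\ref{cor:AnnisKer}, the restriction $\chi^m|_{T_\cS}$ depends only on $\ev(m)\in\ev(N(\cS))$, and distinct values of $\ev$ give distinct, hence linearly independent, characters of $T_\cS$. Grouping the monomials of $F$ by the value of $\ev$ on their exponents, the restriction of $F$ to a coset $t\,T_\cS=T_{\cS,E}$ vanishes identically if and only if, for each $f\in\ev(N(\cS))$, the partial sum $\sum_{\ev(m)=f}c_m\chi^m(t)$ vanishes. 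Hence it suffices to exhibit one monomial of $F$ whose exponent is alone in its $\ev$-class: its coefficient times $\chi^m(t)\ne 0$ makes the corresponding partial sum nonzero, so $T_{\cS,E}\not\subset\{F=0\}$ for every $E$ at once.

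\textbf{Main obstacle.} The difficulty is concentrated in two computations: identifying the monomial support of the limiting Gerritzen equation $F$ (most cleanly via the product description of $\CR(\cS)$ above), and checking that a lonely $\ev$-class exists for \emph{every} admissible weighting, since the partition of the monomials into $\ev$-classes varies with the number field and the weights through the two-dimensional kernel $\Ker(\ev|_{N(\cS)})$. I expect the second point to be the real obstacle: one must combine the constraints that admissibility imposes on $\Ker(\ev|_{N(\cS)})$ with the explicit exponents of $F$ to exclude, in all cases, the degenerate possibility that the monomials cluster into $\ev$-classes with no singleton.
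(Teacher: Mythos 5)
Your setup, your derivation of the defining equation as a limit of Gerritzen's polynomial, and your reduction to a monomial/character argument on the translated torus all coincide with the paper's proof. But you stop exactly where the proof actually happens: you reduce the proposition to showing that some monomial of $F_5$ is alone in its $\ev$-class, flag this as ``the real obstacle,'' and leave it open. That is a genuine gap, not a technicality --- without it nothing is proved.

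The missing step is closed in the paper as follows, and the two facts you were worried about turn out to be harmless. First, the limiting equation has only \emph{four} monomials,
$F_5 = \tR_{12}\tR_{13}\tR_{14}\tR_{23}\tR_{24} - \tR_{12}\tR_{14}\tR_{23} - \tR_{12}\tR_{13}\tR_{24} - \tR_{13}\tR_{14}\tR_{23}\tR_{24}$,
with exponent vectors $v_1=(1,1,1,1,1)$, $v_2=(1,0,1,1,0)$, $v_3=(1,1,0,0,1)$, $v_4=(0,1,1,1,1)$ spanning a three-dimensional space. If no monomial is lonely, each $v_i$ must differ from some $v_j$ by an element of the two-dimensional space $N=\Ker(\ev|_{N(\cS)})$; with four exponents spanning dimension three this forces $N$ to be spanned by two differences $v_i-v_j$ and $v_k-v_l$ with $\{i,j,k,l\}=\{1,2,3,4\}$. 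Second, the dependence on the weighting is entirely benign: computing $\ev$ of such a difference in the basis $s_i\otimes s_j - s_3\otimes s_4$ of $N(\cS)$ gives an expression like $s_1s_2-s_1s_4-s_2s_3+s_3s_4=(s_1-s_3)(s_2-s_4)$, which cannot vanish because $F$ is a field and $s_1,\dots,s_4$ are a $\ratls$-basis (Lemma~\ref{le:resbasis}). So the pairing-up is impossible for \emph{every} admissible weighting, a lonely class always exists, and no translate $T_{\cS,E}$ lies in $V(F_5)$. To complete your argument you must (i) actually extract the leading term of Gerritzen's equation under $R_{34}\to\infty$ to get the four monomials above, and (ii) run this factorization argument on the pairwise differences; neither is long, but both are indispensable.
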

\par
\begin{proof}
A basis of $N(\cS)$ for this stratum in given by $s_i \otimes s_j - s_3 \otimes s_4$, 
with $i < j$ and $(i,j) \neq (3,4)$. We view this stratum as a degeneration
of the irreducible stratum, obtained by unpinching the node with label $5$.
We derive the equation of the $\CR$-image of this stratum from
the equation of the irreducible stratum in Proposition~\ref{prop:gerritzeneq}.
The coordinates $\tR_{ij} = \Psi(s_i \otimes s_j - s_3 \otimes s_4)$
are related to the coordinates in that proposition by $\tR_{ij} = R_{ij}/R_{34}$. 
Pinching the node with label $5$ takes $R_{34}$ to $\infty$. Hence in order
to determine the image of $\CR$ for the $[5] \times^5 [5]$-stratum, 
we rewrite Gerritzen's equation in terms of the $\tR_{ij}$ and $R_{34}$,
and consider the leading term for $R_{34} \to \infty$. 
\par
Consequently, in these coordinates the image of $\CR$ is the variety $V(F_5)$ cut out by $F_5=0$, where
$$ F_5 = \tR_{12}\tR_{13}\tR_{14}\tR_{23}\tR_{24} -  \tR_{12}\tR_{14}\tR_{23}
- \tR_{12}\tR_{13}\tR_{24} - \tR_{13}\tR_{14}\tR_{23}\tR_{24}.$$ For convenience, we relabel the
coordinates $\tR_{ij}$ as $Z_1, \ldots, Z_5$, using the lexicographical order.

Consider the vectors of exponents $v_1 =
(1,1,1,1,1)$, $v_2=(1,0,1,1,0)$, $v_3=(1,1,0,0,1)$, $v_4=(0,1,1,1,1)$ for the monomials $m_1, \ldots,
m_4$ appearing in $F_5$.  Suppose first that $V(F_5)$ contains a translate of the torus $T$
parametrized by $f_\ba(t) = (t^{a_1}, \ldots. t^{a_5})$.  We have $m_i\circ f(t) = t^{n_i}$, where $n_i
= v_i \cdot \ba$.  It follows that for $T$ to be contained in $V(F_5)$, we must have that for each
$v_i$, there is some other $v_j$ such that $(v_i - v_j)\cdot \ba = 0$.

Now suppose $F_5$ contains a translate of the three-dimensional torus $T_\cS$.  Let $P\subset
\ratls^5$ be the three-dimensional subspace which parmetrizes $T$, and let $N = P^\perp\subset
\ratls^5$.  Given $v_i$, by the above discussion there must be some $j\neq i$ such that $v_i -
v_j\in N$ (using that a vector space over $\ratls$ can not be the union of proper subspaces).  
Since the span of $\{v_1,v_2,v_3,v_4\}$ is three-dimensional, this is only 
possible if there is a basis $n_1,n_2$ of $N$ such that 
$n_1$ is the difference of two of the $v_i$ and $n_2$ is the
difference of the other two $v_i$. Suppose that $n_1 = v_2-v_1$
and $n_2 = v_3 - v_4$, the other two cases will lead to the same contradiction.
\par
By Proposition~\ref{prop:epsilonstuff} the condition that $n_2$ is 
perpendicular to $T_\cS$ is equivalent to 
$$\trace(x (s_1s_2-s_1s_4-s_2s_3+s_3s_4)) = 0 \quad \text{for all} \quad x \in F,$$ 
i.e.\ $0=s_1s_2-s_1s_4-s_2s_3+s_3s_4=(s_1-s_3)(s_2-s_4)$.
This contradicts that the $s_i$ are a $\ratls$-basis of $F$.
\end{proof}
\par
\begin{prop} 
Proposition~\ref{prop:bdcondition} holds for the ``doubled triangle-stratum'' $\cS$
given by the graph $(4,2)$. 
\end{prop}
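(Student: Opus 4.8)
The plan is to treat the doubled-triangle stratum $(4,2)$ in direct parallel to the $[5]\times^5[5]$ case just completed, since both are exceptional strata where the disjoint-loop and shared-vertex arguments fail. First I would fix the labelling of the six edges of the dual graph $\Gamma(\cS)$ as in Figure~\ref{fig:graphs}, and use Lemma~\ref{le:resbasis} to confirm that $r_1,r_2,r_3,r_4$ form a $\ratls$-basis of $F$ with dual basis $s_1,s_2,s_3,s_4$. Using Lemma~\ref{le:NSloops} applied to edge-disjoint loops, I would write down an explicit basis of $N(\cS)$ in terms of elementary tensors $s_i\otimes s_j$ (and differences thereof), and compute the coordinates $\Psi(\tau_k)$ as products of cross-ratios. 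The key computational input will be an explicit description of $\CR(\cS)\subset A_\cS$ as the vanishing locus of one or more polynomials $F$ in these coordinates, obtained by normalizing the marked points on the components (the doubled triangle has two triangular components sharing three nodes, so the normalization places points on two copies of $\proj^1$) and eliminating the normalization parameters. As in the $[5]\times^5[5]$ proof, this equation may be derivable as a degeneration limit, or directly from the cross-ratio relations among points on each $\proj^1$.

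The core of the argument then follows the tropical/Newton-polytope strategy of the previous proposition. Assuming for contradiction that $T_{\cS,E}\subset\overline{\CR(\cS)}$, and recalling from Proposition~\ref{prop:RMtorusdimension} that $\dim T_\cS\le g-1=3$, I would suppose $V(F)$ contains a translate of the three-dimensional torus $T_\cS$ parametrized by $f_\ba(t)=(t^{a_1},\ldots,t^{a_m})$. For each monomial $m_i=\prod Z_k^{v_{ik}}$ appearing in $F$ with exponent vector $v_i$, containment forces that for every $i$ there is some $j\ne i$ with $(v_i-v_j)\cdot\ba=0$, i.e.\ $v_i-v_j\in N:=P^\perp$ where $P$ is the subspace parametrizing the translated torus. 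Combining this pairing-up of monomial exponent differences with the dimension count on $N$, I expect to conclude that $N$ must be spanned by a small number of specific differences $v_i-v_j$.

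The final step converts this lattice condition back into a statement about $F$ using Proposition~\ref{prop:epsilonstuff} and Corollary~\ref{cor:AnnisKer}. Each candidate generator $n$ of $N$ corresponds, via $\ev$ and the identification of $N(\cS)$ with characters, to a condition $\Tr(x\cdot\ev(n))=0$ for all $x\in F$, hence $\ev(n)=0$ in $F$. For the doubled triangle I anticipate that $\ev(n)$ will factor as a product like $(s_i-s_j)(s_k-s_l)$, and the vanishing of such a product contradicts the fact that $s_1,s_2,s_3,s_4$ are a $\ratls$-basis of $F$ (a field has no zero divisors), exactly as in the $[5]\times^5[5]$ case. I would need to check each of the finitely many admissible pairings of the exponent vectors and verify that all of them lead to such a product vanishing; by the symmetry of the doubled triangle, these cases should reduce to a single representative one.

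The main obstacle I expect is determining the defining equation of $\CR(\cS)$ for the doubled triangle and verifying that its monomial support has the right combinatorial structure for the exponent-difference argument to close off. Unlike the $[5]\times^5[5]$ stratum, which degenerates cleanly from Gerritzen's irreducible equation of Proposition~\ref{prop:gerritzeneq}, the doubled triangle's two three-valent components may require either a separate direct cross-ratio computation or a different degeneration path, and the resulting $F$ could have more monomials, forcing me to analyze more pairing configurations. If the equation turns out to have exactly four or five monomials with an exponent matrix whose differences span a two-dimensional lattice $N$, the argument goes through verbatim; the delicate point is ensuring no exotic pairing of exponents yields an $\ev(n)$ that is a nonzero element of $F$, which would break the contradiction.
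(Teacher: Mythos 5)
Your instinct to mirror the $[5]\times^5[5]$ argument is reasonable, but as written the proposal has a genuine gap: the entire content of the proof is the explicit relation satisfied by the cross-ratio coordinates, and you defer exactly that computation, flagging it as ``the main obstacle.'' You also misdescribe the stratum: the doubled triangle has \emph{three} components, each a $4$-pointed $\proj^1$ (a triangle with every edge doubled), not ``two triangular components sharing three nodes,'' so the normalization you plan to carry out is set up on the wrong curve. The paper's proof takes the three bigon loop-pairs $((1\bar 4),(3\bar 6))$, $((2\bar 5),(1\bar 4))$, $((3\bar 6),(2\bar 5))$ and the pair of triangle loops $((123),(456))$, obtaining via Lemma~\ref{le:NSloops} four coordinates $R_1,\dots,R_4$ on the four-dimensional torus $A_\cS$ together with the identity $R_4=(1-R_1)^{-1}(1-R_2)^{-1}(1-R_3)^{-1}$, coming from the standard symmetries of the cross-ratio.

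Moreover, you miss the simplification that makes the exponent-pairing machinery unnecessary here. Since $\dim A_\cS=4$ and $\CR(\cS)$ is irreducible of dimension $3$, containment of a three-dimensional translated torus $T_{\cS,E}$ in $\overline{\CR(\cS)}$ forces $\overline{\CR(\cS)}$ to \emph{equal} that torus; a codimension-one translated subtorus is cut out by a single binomial, and the relation above is visibly not of that form. So one needs only \emph{some} non-binomial relation satisfied by $\CR(\cS)$, not the full defining ideal, and no Newton-polytope case analysis or $\ev$-computation of candidate normal vectors is required. Your plan, if the defining equation were actually computed (it has nine monomials after expansion), could likely be pushed through, but it would be substantially more work than the paper's two-line conclusion, and in its current form it does not constitute a proof.
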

\par
\begin{proof}
By Lemma~\ref{le:NSloops} the three pairs of loops 
$((1\bar{4}),(3\bar{6}))$, $((2\bar{5}), (1\bar{4}))$
and $((3\bar{6}),(2\bar{5})) $ define elements of $N(\cS)$.
Their $\Psi$-images are 
$$R_1= [P_1,Q_3,Q_6,P_4], \quad R_2= [P_2,Q_1,Q_4,P_5], \quad R_3= [P_3,Q_2,Q_5,P_6].$$ 
%$$R_1= [P_1,Q_3,Q_6,P_4]^{-1}, \quad R_2= [P_2,Q_1,Q_4,P_5]^{-1}, \quad R_3= [P_3,Q_2,Q_5,P_6]^{-1}.$$ 
\par
Consider now the loops $(123)$ and $(456)$. By the same lemma
they define an element in $N(\cS)$ whose $\Psi$-image is
%\begin{equation} \label{eq:dTCReq}
$$
R_4 = [P_1,Q_6,P_4,Q_3][P_2,Q_4,P_5,Q_1][P_3,Q_5,P_6,Q_2] = 
\frac{1}{1-R_1}\frac{1}{1-R_2}\frac{1}{1-R_3}
%%R_4 = [P_1,Q_6,P_4,Q_3]^{-1}[P_2,Q_4,P_5,Q_1]^{-1}[P_3,Q_5,P_6,Q_2]^{-1} = 
%%\frac{R_1-1}{R_1}\frac{R_2-1}{R_2}\frac{R_3-1}{R_3}
$$
%\end{equation}
\par
Since $\cS$ is irreducible and $\dim \CR(\cS) =3$, if $\CR(\cS)$ contains the 
RM-torus, then $\CR(\cS)$ is equal to that torus. But the above equation 
obviously does not cut out a subtorus of the ambient torus $A_\cS$ with
coordinates $R_1,R_2,R_3,R_4$. 
\end{proof}
\par

%%%%%%%%%%%%%%%%%%%%%%%%%%%%%%%%%%%%%%%%%%%%%%%%%%%%%%%%%%%%%%%%%%%%%%%%%%%%%%%%%%%%
\section{Strata with separating curves}
\label{sec:suff}
%%%%%%%%%%%%%%%%%%%%%%%%%%%%%%%%%%%%%%%%%%%%%%%%%%%%%%%%%%%%%%%%%%%%%%%%%%%%%%%%%%%%

Finally, we prove Proposition~\ref{prop:redtononsep}, completing the proof of Theorem~\ref{thm:Intro}.

\begin{proof}[Proof of Proposition~\ref{prop:redtononsep}]
By Theorem~\ref{thm:RMboundary} there is an admissible, $\mathcal{I}$-weighted
boundary stratum $\cS'$ such that the boundary component of
$\RM[{\Ord,M}]$ given by hypothesis lies in the image of $\cS'$
under the forgetful map $\cS' \to \barmoduli[4]$. Moreoever, by
hypothesis, the dual weighted graph $\Gamma'=\Gamma(\cS')$ has a separating edge $e$. 
Let $\widetilde{\Gamma}$ be the weighted graph obtained by contracting the
separating edges of $\Gamma'$, preserving weights on the other edges.
If $\widetilde{\Gamma}$ happens to be not nice, say the pair of edges $f$ and $g$ disconnects
$\widetilde{\Gamma}$, then we contract $g$ preserving weights on the other edges. 
The resulting weighted graph will still
be admissible, since the weights on $f$ and $g$ are $r$ and $-r$.
We keep contracting edges until we arrive at a nice graph $\Gamma$. 
If $\widetilde{\Gamma}$ was nice to start with, we take $\Gamma = \widetilde{\Gamma}$.
\par
Let $\cS$ be the corresponding boundary stratum and let $\overline{\cS}$ be the partial
Deligne-Mumford compactification, adding those stable curves whose dual graphs lie between $\Gamma'$
and $\Gamma$.  We want to intersect the cross-ratio images of these spaces with the translated
RM-torus $T_{\cS,E}$. The situation is summarized in the following diagram, where we emphasize that
the cross-ratio map on left is not injective but the one in the middle is injective or two-to-one.
$$\xymatrix{
\cS' \,\, \ar[d]^\CR \ar@{^{(}->}[r] & \overline{\cS} \,\,\ar[d]^\CR \ar[drr]^\CR& & T_{\cS,E} 
\ar@{^{(}->}[d]\\
\CR(\cS') \,\,\ar@{^{(}->}[r] & \CR(\overline{\cS})\,\, \ar@{^{(}->}[rr] & & A_\cS \\
}$$
\par
We may restrict ourselves the case $\dim \cS \geq 4$, since otherwise $\cS'$ cannot contain a
boundary divisor of the real multiplication locus. For the stratum $[5] \times^5 [5]$ there is no
codimension-one degeneration with a separating edge. Since all the other strata $\cS$ with $\dim \cS
\geq 4$ have a loop joining a node to itself, we conclude from Proposition~\ref{prop:dimTSwithloop}
that $\dim T_{\cS,E} = 3$.
\par
The separating edge may split the stable curves into two components
either of genera $2$ and $2$ (the $(2,2)$-case) or of genera $1$ and $3$ (the
$(1,3)$-case). A separating edge also defines a splitting of $F$
into two $\ratls$-subspaces $F_1$ and $F_2$ generated by the
$\lambda$-images of loops in the components of $\Gamma' \setminus \{e\}$.
Each element $a \in F_1 \otimes F_2$ defines by Lemma~\ref{le:NSloops} an element
of $N(\cS)$, and $\CR(\cS')$ is contained in the subtorus defined by $\psi(a)=1$
for all $a \in F_1 \otimes F_2$.
\par
We claim that it is enough to show that $T_{\mathcal{S}, E} \cap \CR(\mathcal{S})\neq\emptyset$.
Suppose that this intersection is in fact nonempty.  By the sufficiency criterion
Theorem~\ref{thm:suff}, this intersection belongs to the intersection of $\overline{\RM[\mathcal{O},
  M]}$ with the boundary of $\barmoduli[4]$.  As $\dim \RM[\mathcal{O}, M]=4$, each irreducible
component of this intersection must be three dimensional.  Thus $T_{\mathcal{S}, E} \cap
\CR(\mathcal{S})$ is contained in a three-dimensional component of $\bdry\RM[\mathcal{O}, M]$ which
lies in some stratum (possibly obtained by further undegenerating $\mathcal{S}$) without separating nodes.
\par
We start with the case of the irreducible stratum $\cS$, hence $\dim A_\cS = 6$.  By the above
discussion, we must show that
the intersection $T_{\mathcal{S}, E} \cap \CR(\overline{\mathcal{S}})$ is not contained in
$\CR(\mathcal{S}')$.  This intersection is at least two-dimsensional, so it suffices to show that
$T_{\mathcal{S}, E} \cap \CR(\mathcal{S}')$ is at most one-dimensional.  In the $(2,2)$-case  
$\dim (F_1 \otimes F_2) = 4$, hence $\CR(\cS') = \CR(\cS') \cap T_{S,E}$.
Proposition~\ref{prop:toruschar2} applied to the torus $U = \CR(\cS')$ and
$\dim \ev(F_1 \otimes F_2) \geq 2$ shows that this intersection is one-dimensional.
In the $(1,3)$-case  $\dim (F_1 \otimes F_2) = 3$ and $\dim \CR(\cS')$
is at least three by the genus three analog of Theorem~\ref{thm:weaktorelli} 
\cite[Corollary 8.4]{bainmoel}, hence $\CR(\cS')$ coincides with the subtorus of $A_\cS$ cut out by 
$F_1 \otimes F_2 \subset N(\cS)$. We now apply Proposition~\ref{prop:toruschar2}
to the torus $U=\CR(\cS')$ to show that the intersection with $T_{\mathcal{S}, E}$ is one-dimensional.
\par
If the stratum $\cS$ is reducible, we have $\dim \cS = 4$ and $\dim A_\cS = 5$.
Again we must show that $T_{\mathcal{S}, E} \cap \CR(\mathcal{S}')$ is at most one-dimensional.  
In the $(2,2)$-case $\dim (F_1 \otimes F_2) = 4$, hence the codimension
of $\CR(\cS')$ in the ambient torus $A_\cS$ is at least $4$ and we obtain
immediately a contradiction.  In the $(1,3)$-case  $\dim (F_1 \otimes F_2) = 3$, 
hence $\CR(\cS')$ has to be a $2$-dimensional torus to which we apply again 
Proposition~\ref{prop:toruschar2}.
Since $\dim \ev(F_1 \otimes F_2) = 3 > 1$, we again have a contradiction. 
\end{proof}
\bibliography{my}

\end{document}